\documentclass[a4paper,fleqn]{cas-sc}

\usepackage[authoryear,longnamesfirst]{natbib}
\usepackage{graphicx}%
\usepackage{multirow}%
\usepackage{amsmath,amssymb,amsfonts}%
\usepackage{amsthm}%
\usepackage{mathrsfs}%
\usepackage[title]{appendix}%
\usepackage{xcolor}%
\usepackage{textcomp}%
\usepackage{manyfoot}%
\usepackage{booktabs}%
\usepackage{algorithm}%
\usepackage{algorithmicx}%
\usepackage{algpseudocode}%
\usepackage{listings}%
\usepackage{ulem}
\usepackage{subfig}
\usepackage{colortbl}
\usepackage{multirow}
\usepackage{ulem}
\def\tsc#1{\csdef{#1}{\textsc{\lowercase{#1}}\xspace}}
\tsc{WGM}
\tsc{QE}
\tsc{EP}
\tsc{PMS}
\tsc{BEC}
\tsc{DE}

\newtheorem{lemma}{Lemma}
\newtheorem{theorem}{Theorem}
\newtheorem{remark}{Remark}%

\begin{document}
\let\WriteBookmarks\relax
\def\floatpagepagefraction{1}
\def\textpagefraction{.001}
\shorttitle{Convergence of Substructuring Waveform Relaxation Algorithms for Hyperbolic PDEs with Time Delay}
\shortauthors{Bankim C. Mandal and Deeksha Tomer}

\title [mode = title]{Convergence of Substructuring Waveform Relaxation Algorithms for Hyperbolic PDEs with Time Delay}                      
\author[inst1]{Bankim C. Mandal}[orcid=0009-0009-0134-0422] 
\ead{bmandal@iitbbs.ac.in}

\author[inst1]{Deeksha Tomer\cormark[1]}[orcid=0009-0002-3269-883X]
\ead{a21ma09002@iitbbs.ac.in}

\cortext[cor1]{Corresponding author}

\affiliation[inst1]{organization={School of Basic Sciences},
            addressline={Indian Institute of Technology Bhubaneswar}, 
            city={Odisha},
            postcode={752050}, 
            country={India}}

\begin{abstract}
This article investigates the application and analysis of two substructuring waveform relaxation algorithms namely Dirichlet-Neumann Waveform Relaxation (DNWR) and Neumann-Neumann Waveform Relaxation (NNWR) for solving hyperbolic partial differential equations (PDEs) with time delay. These equations are relevant in numerous physical and engineering contexts, such as wave propagation, biological processes, and control systems, where the system's dynamics are influenced by past states. The study emphasizes the stability, convergence, and computational efficiency of these non-overlapping domain decomposition methods when applied to such problems. Specifically, the DNWR and NNWR algorithms are analyzed using both Fourier and Laplace transforms in asymmetric domain decomposition to assess their capability to manage delayed terms in hyperbolic systems. Using Fourier analysis, we establish linear convergence estimate for the numerical errors. Laplace transform analysis enables a more in-depth study for characterizing finite-step convergence. Additionally, we derive the optimal parameters required to achieve finite step convergence in presence of heterogeneous spatial domain. Theoretical findings are complemented by numerical experiments, showcasing the methods' effectiveness in maintaining accuracy while reducing computational complexity. Additionally, the study explores potential extensions to more complex problems and diverse applications.
\end{abstract}



\begin{keywords}
Waveform Relaxation \sep Asymmetric Domain Decomposition \sep Hyperbolic PDE with Time Delay \sep Dirichlet-Neumann \sep Neumann-Neumann
\end{keywords}

\maketitle

\section{Introduction}
Hyperbolic PDEs with time delay are fundamental mathematical models used to describe dynamic systems in which the system’s evolution is influenced by both its current state as well as its previous states. Such equations arise across a broad range of scientific and engineering problems, including wave propagation in viscoelastic materials \citep{hale1977}, control systems with delayed feedback \citep{gu2003}, and biological processes such as population dynamics and neural networks \citep{cooke1963differential}. The inclusion of time delays in these systems introduces additional complexity, making their analysis and numerical solution both challenging and essential for understanding real-world phenomena.

Domain decomposition methods (DDMs) are powerful parallel computational tools for solving large-scale PDEs, achieved by partitioning the computational domain into a collection of smaller, more manageable subdomains \citep{quarteroni1999}. DDMs are generally classified into two categories: overlapping and non-overlapping. Non-overlapping DDMs are often referred to as substructuring methods. When applied to time-dependent problems, these are known as substructuring waveform relaxation methods. Among them, Dirichlet-Neumann Waveform Relaxation (DNWR) and Neumann-Neumann Waveform Relaxation (NNWR) have gained significant attention due to their efficiency in handling both linear and non-linear time-dependent problems \citep{gobinda,sana2023dirichlet}. These methods iteratively solve subproblems on non-overlapping subdomains by imposing appropriate boundary conditions, such as Dirichlet or Neumann conditions, at the interfaces. While DNWR and NNWR have been extensively studied for parabolic and hyperbolic PDEs \citep{dd22,gander2016parabolic} their application to PDEs with time delay remains relatively unexplored, despite the growing interest in delay-dependent systems \citep{michiels2007,zhong2006}.

Another popular version of domain decomposition methods, Schwarz Waveform Relaxation (SWR) method \citep{gander1998space, gander2002overlapping} and its optimized counterpart \citep{gander1999optimal, halpern2012optimized} gain substantial momentum over the years for its ability to exploit parallelism. In problems involving time delay, where the computational cost can be high due to the coupling between past and future states, the SWR method allows for the distribution of subproblems across multiple processors. This significantly improves computational efficiency, particularly for large-scale systems with significant delay terms. Kwok, Ong and Mandal \citep{kwok2019schwarz} have provided insights into the parallel nature of the DNWR, NNWR and SWR method and its scalability for large systems. Furthermore, the articles \citet{shulin1, shulin2} address the application of SWR to various versions of delay differential equations and provide a convergence analysis.\\
A comprehensive analysis of NNWR and DNWR in the context of hyperbolic delay problems is currently lacking. To bridge this gap, this paper makes the following primary contributions:
\begin{itemize}
    \item This study investigates the behavior of DNWR method applied to hyperbolic PDEs with time delays under asymmetric domain decomposition. We establish linear convergence bound utilizing Fourier analysis; Laplace transform analysis demonstrates that the method achieves finite-step convergence.
    \item We derive the optimal parameter for achieving finite step convergence of the DNWR algorithm in presence of heterogeneous spatial domain.
    \item Finite-step convergence of NNWR algorithm is mathematically established for multi-subdomain configurations in 1D and 2D spatial settings.
\end{itemize}
Additionally, this framework can be extended to more intricate settings, including systems with multiple or distributed delays, nonlinear dynamics, and coupled partial differential equations, which constitute promising directions for future research \citep{app1,app2}. These methods have been studied and implemented for heterogeneous structures in recent times for various problems, see for examples \cite{Monge2018AMN,MongBir,sana2023dirichlet,gobinda2}. 
To the best of our knowledge, this is the first work addressing the rigorous analysis of DNWR and NNWR methods applied to delay problems.

The main body of the paper is arranged as follows: Sections 2 and 3 present the mathematical formulation of DNWR algorithm for hyperbolic PDEs with time delay including the heterogeneous case. In Section 4 we present the convergence analysis of NNWR method for multiple subdomain setting. Section 5 explores the possible extension of NNWR method in 2D. The numerical experiments and other computational outcomes are enlisted in Section 6. In Section 6, we also compare the performance of DNWR and NNWR with the classical SWR method, demonstrating the superior iteration efficiency of the proposed methods. In particular, DNWR and NNWR do not require overlap for convergence, which makes them well-suited for problems with heterogeneous coefficients, whereas the classical Schwarz method typically requires an overlap to ensure convergence.  
Through this study, we aim to advance the understanding and application of domain decomposition methods for hyperbolic PDEs with time delay, contributing to the broader field of computational mathematics and its applications.

For our model problem, we investigate a linear wave-type equation that features a constant time delay, similar to the formulation discussed in \citet{Rodriguez},
\begin{equation}\label{eq_1}
 u_{tt}=c^2u_{xx}+ \lambda u(x,t-\tau)+h(x,t),\  t>\tau, x\in\Omega\subset \mathbb{R}^d  
\end{equation}
with initial conditions
\begin{equation*}
u(x,t)=\phi (x,t), u_t(x,t)=\psi (x,t),\  -\tau\leq t\leq 0, x\in\Omega 
\end{equation*}
and Dirichlet boundary conditions
\begin{equation*}
 u(x,t)=l(t), t\geq 0, x\in\partial\Omega
 \end{equation*}
 where $c$ is the speed at which waves propagate and $\lambda$ is a free parameter. We examine how the DNWR and NNWR methods to the problem (\ref{eq_1}) behave in terms of convergence. Since the equation is linear, we concentrate on the appropriate error equations where $h(x,t)=l(t)=0$ and $\phi(x,t)=0=\psi (x,t)$.





\section{Convergence Analysis of DNWR}
To implement the DNWR method for equation (\ref{eq_1}) in a one-dimensional setting ($d=1$), the spatial domain $\Omega = [-a, b]$ is partitioned into two disjoint subdomains, $\Omega_1 = [-a, 0]$ and $\Omega_2 = [0, b]$. The iterative procedure is initialized with a guess $h^0(t)$ at the interface. Each iteration consists of two sequential steps: first, solving a Dirichlet boundary value problem in $\Omega_1$, followed by, second, solving a Neumann problem in $\Omega_2$. Upon completing an iteration $k$ ($k \in \{1,2, \ldots\})$, the interface condition $h^k(t)$ is updated. The DNWR process can be described as follows: 
 \\
Dirichlet Part:
\begin{equation}
\left\{\begin{array}{rl}\label{eq_2}
\partial_{tt} e_1^k-c^2\partial_{xx} e_1^k-\lambda e_1^k(x, t-\tau ) &=0, \ \ \ (x, t)\in \Omega_1\times (0, T),  \\ 
  e_1^k(x, t)&=0, \  \ \ (x, t)\in \Omega_1\times [-\tau, 0], \\ 
   \partial_t e_1^k(x, t)&=0, \  \ \ (x, t)\in \Omega_1\times [-\tau, 0], \\ 
e_1^k(-a, t)&=0, \ \ \ t\in(0, T),\\ 
e_1^k(0, t)&=h^{k-1}(t), \ \ \  t\in(0, T).
\end{array}\right.
\end{equation}\\
Neumann Part:
\begin{equation}\label{eq_3}
\left\{\begin{array}{rl}
\partial_{tt} e_2^k-c ^2\partial_{xx} e_2^k-\lambda e_2^k(x, t-\tau )&=0, \ (x, t)\in \Omega_2\times (0, T),  \\ 
  e_2^k(x, t)&=0, \  (x, t)\in \Omega_2\times [-\tau, 0], \\ 
  \partial_t e_2^k(x, t)&=0, \  \ \ (x, t)\in \Omega_1\times [-\tau, 0], \\ 
  \partial_x e_2^k(0, t)&=\partial_x e_1^k(0 , t), \  t\in(0, T),\\
e_2^k(b, t)&=0,  \  t\in(0, T).

\end{array}\right.
\end{equation}
For the relaxation parameter $\theta \in (0,1]$, the interface update rule is
\begin{equation}\label{eq_4}
h^{k}( t)=\theta e_2^{k}(0,t) +(1-\theta) h^{k-1}(t).
\end{equation}\\
The primary aim is to examine how the interface error \(h^k(t)\) reduces and approaches zero as \(k \to \infty\), ultimately producing a solution that is smooth throughout the computational domain.
\subsection{Convergence using Fourier Transform}
The convergence is analyzed by applying a Fourier transform in time to the error equations. This converts the time-dependent PDE into a family of parameter-dependent Helmholtz problems in the frequency domain. Analyzing the iteration operator for each Fourier parameter allows one to derive a convergence factor. First, we present the following lemmas.\\
\begin{lemma} \label{lem_1}
    Let $0 < c < d$. The function defined by $g(x) := \frac{\tanh(c x)}{\tanh(d x)}$ is monotonically increasing for $x > 0$ and satisfies the lower bound $g(x) > \frac{c}{d}$.
\end{lemma}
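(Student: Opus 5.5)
We plan to prove monotonicity through the sign of the logarithmic derivative, and then get the lower bound from the limit of $g$ at $0^+$.

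For $x>0$ both $\tanh(cx)$ and $\tanh(dx)$ are positive, so $g>0$ and
\begin{equation*}
\frac{g'(x)}{g(x)} = \frac{c\,\mathrm{sech}^2(cx)}{\tanh(cx)} - \frac{d\,\mathrm{sech}^2(dx)}{\tanh(dx)} = \frac{2c}{\sinh(2cx)} - \frac{2d}{\sinh(2dx)},
\end{equation*}
using $\mathrm{sech}^2(y)/\tanh(y) = 1/(\sinh y\cosh y) = 2/\sinh(2y)$. Multiplying by $x>0$ and setting $\varphi(y) := y/\sinh(y)$ for $y>0$, we obtain
\begin{equation*}
x\,\frac{g'(x)}{g(x)} = \varphi(2cx) - \varphi(2dx).
\end{equation*}
Since $2cx<2dx$, it suffices to show that $\varphi$ is strictly decreasing on $(0,\infty)$.

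This is the only real step. We have $\varphi'(y) = (\sinh y - y\cosh y)/\sinh^2 y$. The numerator $\chi(y)=\sinh y - y\cosh y$ satisfies $\chi(0)=0$ and $\chi'(y) = -y\sinh y<0$ for $y>0$. Hence $\chi<0$ and $\varphi'<0$ on $(0,\infty)$. It follows that $g'(x)>0$ for all $x>0$, so $g$ is strictly increasing.

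For the lower bound, $\tanh(cx)\sim cx$ and $\tanh(dx)\sim dx$ as $x\to0^+$, so $\lim_{x\to0^+} g(x) = c/d$. Strict monotonicity then gives $g(x) > \lim_{s\to 0^+} g(s) = c/d$ for every $x>0$. As a consistency check, $g(x)\to 1$ as $x\to\infty$, which agrees with $c/d<1$.
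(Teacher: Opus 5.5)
Your proof is correct, and its outline matches the paper's: show $g'>0$ on $(0,\infty)$, then combine strict monotonicity with $\lim_{x\to0^+}g(x)=c/d$. The two arguments differ at the key inequality $c\tanh(dx)\operatorname{sech}^2(cx)>d\tanh(cx)\operatorname{sech}^2(dx)$.

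The paper justifies this inequality only by noting that $\tanh$ is increasing and $\operatorname{sech}$ is decreasing, which is not enough. Those two facts do make $\tanh(dx)\operatorname{sech}^2(cx)$ exceed $\tanh(cx)\operatorname{sech}^2(dx)$, but the prefactor $c<d$ pulls the other way. In fact both sides equal $cdx+O(x^3)$ as $x\to0^+$; their difference is $\tfrac{2}{3}cd(d^2-c^2)x^3+\cdots$, a higher-order effect that monotonicity of $\tanh$ and $\operatorname{sech}$ alone cannot detect.

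After dividing by $\tanh(cx)\tanh(dx)$, what is actually needed is $\sinh(2dx)/\sinh(2cx)>d/c$, i.e.\ that $y\mapsto\sinh(y)/y$ is increasing. That is exactly your statement that $\varphi(y)=y/\sinh y$ is decreasing, which you prove cleanly via $\chi(y)=\sinh y-y\cosh y$, $\chi(0)=0$, $\chi'(y)=-y\sinh y<0$. Your logarithmic-derivative reduction therefore turns the paper's heuristic step into a complete argument.

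As a side benefit, the same monotonicity of $\varphi$ gives the bound $\sinh(cx)/\sinh(dx)<c/d$ directly. That is the bound the paper's next lemma obtains only by passing back through this one.
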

\begin{proof}
    On differentiating $g$ we get,
\begin{equation*}
 g'(x)=\frac{c\tanh(d x)\operatorname{sech}^2(c x)
-d\tanh(c x)\operatorname{sech}^2(d x)}{\tanh^2(d x)}.   
\end{equation*} 
For $g'(x)>0$, we must prove that $c\tanh(d x)\operatorname{sech}^2(c x)>d\tanh(c x)\operatorname{sech}^2(d x)$ which is true since $\tanh(x)$ is an increasing function and $\operatorname{sech}(x)$ is a decreasing function. Additionally, with $x \to 0,$ as the limit, we obtain
       \begin{equation*}
         \frac{\tanh(c x)}{\tanh(d x)} \to \frac{c}{d},
       \end{equation*}
   Hence the Lemma is proved.
\end{proof}
\begin{lemma}\label{lem_2}
    The function $p(x):= \frac{\sinh(c x)}{\sinh(d x)}$ is a monotonically decreasing function for $0<c<d$ and $x>0$ and lies in $\left ( 0,\frac{c}{d} \right )$.\\
\end{lemma}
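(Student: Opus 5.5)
We prove the two claims separately: first that $p$ is decreasing on $(0,\infty)$, then that its range is exactly $(0,c/d)$.

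\textbf{Monotonicity.} Differentiating gives
\begin{equation*}
p'(x)=\frac{c\cosh(cx)\sinh(dx)-d\sinh(cx)\cosh(dx)}{\sinh^2(dx)}.
\end{equation*}
Since $\sinh(cx)\sinh(dx)>0$ for $x>0$, the sign of $p'(x)$ equals the sign of $\frac{c}{\tanh(cx)}-\frac{d}{\tanh(dx)}$. So it suffices to show $q(y):=y\coth y$ is strictly increasing on $(0,\infty)$. Then $cx<dx$ gives $q(cx)<q(dx)$. Dividing by $x>0$ turns this into $c\coth(cx)<d\coth(dx)$, hence $p'(x)<0$.

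\textbf{The key step.} We have $q'(y)=\coth y-y\,\mathrm{csch}^2 y$. Multiplying by $\sinh^2 y>0$ gives $\sinh y\cosh y-y=\tfrac12(\sinh 2y-2y)$. This is positive for $y>0$ because $\sinh z>z$ for $z>0$, which follows from the Taylor series or from $(\sinh z-z)'=\cosh z-1>0$.

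A caution: the argument in Lemma~\ref{lem_1} uses only "tanh increasing, sech decreasing", and that reasoning does not transfer directly here. The reduction to $y\coth y$ is what makes the comparison rigorous. It also shows that the key inequality is the same one underlying Lemma~\ref{lem_1}: for $p$ we need $c\coth(cx)<d\coth(dx)$, i.e.\ $q(cx)<q(dx)$. For Lemma~\ref{lem_1}, $g$ increasing is equivalent to $\frac{c}{\sinh(cx)\cosh(cx)}>\frac{d}{\sinh(dx)\cosh(dx)}$, i.e.\ $\frac{\sinh(2cx)}{2cx}<\frac{\sinh(2dx)}{2dx}$. That is monotonicity of $\sinh z/z$, a companion fact that follows from the same inequality $\tanh z<z$.

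\textbf{Range.} By L'H\^opital, or from $\sinh(cx)\sim cx$ and $\sinh(dx)\sim dx$, we get $p(x)\to c/d$ as $x\to0^+$. As $x\to\infty$, $p(x)\sim e^{-(d-c)x}\to0$. Also $p(x)>0$ for $x>0$. Since $p$ is continuous and strictly decreasing, its values lie strictly between these limits, so $p(x)\in(0,c/d)$.

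The main obstacle is the sign of $p'$. Everything else is routine limits.
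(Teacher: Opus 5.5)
Your proof is correct, and its skeleton matches the paper's. You compute $p'$, reduce $p'(x)<0$ to the single inequality $c\coth(cx)<d\coth(dx)$ (equivalently, the paper's condition $\tanh(cx)/\tanh(dx)>c/d$), and read off the range from the limits $c/d$ at $0^+$ and $0$ at $\infty$ together with strict monotonicity.

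The difference is in how that key inequality is secured. The paper invokes Lemma~\ref{lem_1}, whose bound $g>c/d$ is deduced from the monotonicity of $g$ plus its limit at $0$. That monotonicity is justified only by ``$\tanh$ increasing, $\operatorname{sech}$ decreasing''. This gives $\tanh(dx)\operatorname{sech}^2(cx)>\tanh(cx)\operatorname{sech}^2(dx)$, but it ignores that the weights $c<d$ pull the other way. So the paper's chain is weak exactly where you flagged it.

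You instead prove the inequality directly as the two-point comparison $q(cx)<q(dx)$ for $q(y)=y\coth y$, using only $\sinh 2y>2y$. This makes Lemma~\ref{lem_2} self-contained, and it needs only the lower bound $g>c/d$, not the stronger monotonicity of $g$. The paper's route is shorter, but it is only sound once Lemma~\ref{lem_1} is repaired (e.g.\ via your reduction to monotonicity of $\sinh z/z$).

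One cosmetic slip in your side remark: your key step rests on $\sinh z>z$, while monotonicity of $\sinh z/z$ rests on $\tanh z<z$. These are companion inequalities rather than ``the same'' one; this does not affect your argument.
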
 
\begin{proof}

Given that $\sinh(x)$ is strictly increasing on the interval $(0, \infty)$, the condition $0 < c < d$ implies $\sinh(c x) < \sinh(d x)$. Furthermore, as $x \to \infty$,
\begin{equation*}
\frac{\sinh(c x)}{\sinh(d x)} \to 0,    
\end{equation*}
and when differentiating $p(x)$, we get
\begin{equation*}
  p'(x)=\frac{c\sinh(d x)\cosh(c x)-d\sinh(c x)\cosh(d x)}{\sinh^2(d x)}  .
\end{equation*}
To prove $p'(x)<0$ we need to show $c\sinh(d x)\cosh(c x)<d\sinh(c x)\cosh(d x)$ i.e. $\frac{\tanh(c x)}{\tanh(d x)}>\frac{c}{d}$, which is true by Lemma \ref{lem_1}. Also, as $x \to 0$
\begin{equation*}
  \frac{\sinh(c x)}{\sinh(d x)} \to \frac{c}{d}.  
\end{equation*}
This concludes the proof.
\end{proof}
\begin{lemma}\label{lem_3}
For a complex number $u$, let $\Re(u)$ be its real part and $\Im(u)$ be its imaginary part.
   Consider the function
    \begin{equation*}
        q(u)=\frac{\sinh(cu)}{\sinh(du)},
    \end{equation*} for $0<c<d$.
    Then $q(u)$ satisfies the bound 
    $ \left | q(u)\right |<\frac{c}{d}, \  \forall \ \ u \in \mathbb{D}$ where $
        \mathbb{D}= \left\{ u \in\mathbb{C}|0<\Im(u)<\Re(u)\right\}.$
\end{lemma}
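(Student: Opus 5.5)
The plan is to reduce the claim to a real inequality and then compare power series term by term. Write $u = x + iy$ with $0 < y < x$. For real $\alpha>0$ the identity
\begin{equation*}
|\sinh(\alpha u)|^2 = \sinh^2(\alpha x) + \sin^2(\alpha y) = \tfrac{1}{2}\bigl(\cosh(2\alpha x) - \cos(2\alpha y)\bigr)
\end{equation*}
gives
\begin{equation*}
|q(u)|^2 = \frac{\cosh(2cx) - \cos(2cy)}{\cosh(2dx) - \cos(2dy)} .
\end{equation*}
The denominator is strictly positive for $x>0$, since $\cosh(2dx) > 1 \ge \cos(2dy)$. Hence the statement $|q(u)| < c/d$ is equivalent to
\begin{equation*}
F(x,y) := d^2\bigl(\cosh(2cx) - \cos(2cy)\bigr) - c^2\bigl(\cosh(2dx) - \cos(2dy)\bigr) < 0 .
\end{equation*}

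Next I expand both hyperbolic and trigonometric cosines in their Maclaurin series. The constant terms cancel, and this leaves
\begin{equation*}
\cosh(2\alpha x) - \cos(2\alpha y) = \sum_{n\ge 1} \frac{(2\alpha)^{2n}}{(2n)!}\bigl(x^{2n} - (-1)^n y^{2n}\bigr).
\end{equation*}
Substituting this into $F$ yields
\begin{equation*}
F(x,y) = \sum_{n\ge 1} \frac{2^{2n}}{(2n)!}\bigl(x^{2n} - (-1)^n y^{2n}\bigr)\, c^2 d^2\bigl(c^{2n-2} - d^{2n-2}\bigr).
\end{equation*}
The $n=1$ term vanishes identically, which reflects the limit $c/d$ already seen in Lemma \ref{lem_2}.

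For $n \ge 2$ I claim every term is strictly negative. Since $0<c<d$, we have $c^{2n-2} - d^{2n-2} < 0$. On the other hand, $x^{2n} - (-1)^n y^{2n} \ge x^{2n} - y^{2n} > 0$ because $x > y > 0$; this is exactly where the sector $\mathbb{D}$ enters. All series converge absolutely, so $F(x,y) < 0$, and taking square roots gives $|q(u)| < c/d$ on $\mathbb{D}$.

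The main obstacle I anticipate is only bookkeeping: getting the signs of the $(-1)^n y^{2n}$ contributions right and confirming the cancellation at $n=1$. Because the argument only uses $|y|<x$, it works on the whole sector without any monotonicity argument of the kind used in Lemmas \ref{lem_1} and \ref{lem_2}.
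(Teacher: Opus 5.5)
Your proof is correct, but it takes a different route from the paper's. The paper starts from the same modulus formula $|q(u)|^2=\bigl(\sinh^2(c\Re u)+\sin^2(c\Im u)\bigr)/\bigl(\sinh^2(d\Re u)+\sin^2(d\Im u)\bigr)$. It uses it only to get the crude bound $|q(u)|<\sqrt{2}\,c/d$ via Lemma \ref{lem_2}, which gives boundedness on $\mathbb{D}$. It then appeals to the maximum modulus principle to reduce to the two boundary rays: $\Im u=0$, handled by Lemma \ref{lem_2}, and $\Im u=\Re u$, handled by citing a textbook theorem. You instead recast the claim as $F(x,y)<0$ and compare Maclaurin coefficients. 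This is entirely elementary: it needs neither Lemmas \ref{lem_1}--\ref{lem_2} nor any maximum principle. It also shows exactly where the sector enters, since only the even-$n$ terms need $|y|<x$. Your computation in fact gives more. The odd-$n$ terms (e.g.\ $n=3$) are strictly negative for every $y$, and the even ones are $\le 0$ as soon as $|y|\le x$. So you get $|q(u)|<c/d$ on the closed sector $|\Im u|\le \Re u$, $u\neq 0$, which directly supplies the estimate on the diagonal ray $u=(1+i)r$ that the paper's boundary argument relies on. The paper's route needs extra care to be rigorous: on an unbounded sector the maximum principle must be used in Phragm\'en--Lindel\"of form, which is what the preliminary bound $\sqrt{2}\,c/d$ makes possible. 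Its advantage is that the structure transfers to analytic ratios where only boundary behaviour is known and no sign-definite series expansion is available.
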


\begin{proof}
     Clearly $q(u)$ is analytic in $\mathbb{D}$, thus the absolute value is given by,
    \begin{equation*}
      \left | q(u)\right |=\frac{\sqrt{\sinh^2(  \left (c \Re (u) \right )+\sin^2(c\Im (u))}}{\sqrt{\sinh^2(d\Re (u))+\sin^2(d\Im(u))}}.  
    \end{equation*}
    Since $\Im(u)<\Re(u)$, so $ |q(u)|<\frac{\sqrt{2}\sinh(c\Re(u))}{\sinh(d\Re(u))}$.
    Therefore, using Lemma \ref{lem_2} we get, $|q(u)|<\sqrt{2}\frac{c}{d}$.\\
    Given that the maximum is attained on the boundary of the domain $\mathbb{D}$, an application of Lemma \ref{lem_2}  yields,
    \begin{equation*}
        \left | q(u)\right |=\max\left\{ \frac{\sinh(c\Re(u))}{\sinh(d\Re(u))},\left | \frac{\sinh(c(1+i)\Re(u))}{\sinh(d(1+i)\Re(u))}\right | \right\}.
    \end{equation*}
    Finally, using Theorem $15.1$ of \citet{complex}, $|q(u)|<\frac{c}{d}, \; \forall u \in \mathbb{D}$.
\end{proof}
\begin{theorem}
For the time-delay hyperbolic PDE (\ref{eq_1}), the DNWR algorithm (\ref{eq_2})--(\ref{eq_4}) with $\theta = 1/2$ satisfies the linear estimates,

for $a > b$:
 \begin{equation*}
    \left\| h^k\right\|_{L^2(\Gamma _T)}\leq \left ( \frac{(a-b)}{2a} \right )^k\left\| h^0\right\|_{L^2(\Gamma_T)}, 
 \end{equation*}
whereas for $a<b$:  
\begin{equation*}
\left\| h^k\right\|_{L^2(\Gamma _T)}\leq \left ( \frac{(b-a)}{2a} \right )^k\left\| h^0\right\|_{L^2(\Gamma_T)},    
\end{equation*}
with $\Gamma _T=\{0\}\times[0,T]$.
 \label{thm:thmdnwr}
\end{theorem}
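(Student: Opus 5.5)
Our plan is to Fourier transform the error equations (\ref{eq_2})--(\ref{eq_3}) in time, after extending all functions by zero for $t<0$ (compatible with the zero history data) and to $t\in(0,\infty)$. With $\hat e(x,\omega)=\int e(x,t)e^{-i\omega t}\,dt$, the delay term becomes $\lambda e^{-i\omega\tau}\hat e$, so each subproblem reduces to the ODE
\begin{equation*}
c^2\hat e_{xx}=s(\omega)^2\,\hat e,\qquad s(\omega)^2:=\frac{-\omega^2-\lambda e^{-i\omega\tau}}{c^2},
\end{equation*}
and we fix the branch of $s=\sqrt{\cdot}$ with $\Re(s)>0$, rescaling $c$ into $s$. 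Imposing $\hat e_1^k(-a)=0$, $\hat e_1^k(0)=\hat h^{k-1}$ gives $\hat e_1^k(x)=\hat h^{k-1}\sinh(s(x+a))/\sinh(sa)$. The Neumann problem on $[0,b]$ with $\hat e_2^k(b)=0$ then yields $\hat e_2^k(0)=-\hat h^{k-1}\tanh(sb)/\tanh(sa)$. The update (\ref{eq_4}) with $\theta=1/2$ gives $\hat h^k=\rho(\omega)\hat h^{k-1}$, where
\begin{equation*}
\rho=\tfrac12\Bigl(1-\frac{\tanh(sb)}{\tanh(sa)}\Bigr)=\frac{\sinh(s(a-b))}{2\sinh(sa)\cosh(sb)},
\end{equation*}
using $\sinh(sa)\cosh(sb)-\cosh(sa)\sinh(sb)=\sinh(s(a-b))$.

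Next we bound $|\rho|$ uniformly in $\omega$. For $a>b$ we write $\rho=\frac{1}{2}\frac{\sinh(s(a-b))}{\sinh(sa)}\cdot\frac{1}{\cosh(sb)}$. We would apply Lemma \ref{lem_3} (with $c\to a-b$, $d\to a$) to get $|\sinh(s(a-b))/\sinh(sa)|<(a-b)/a$. This requires $s$ to lie in $\mathbb{D}$, i.e.\ $0<\Im s<\Re s$, possibly after replacing $s$ by $\bar s$ or $-s$, since $\rho$ is even in $s$. We would also need $|\cosh(sb)|\ge1$, which we would try to obtain from $|\cosh(sb)|^2=\sinh^2(b\Re s)+\cos^2(b\Im s)$ together with $|\Im s|\le\Re s$. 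Combining gives $|\rho|\le (a-b)/(2a)$. For $a<b$ we rewrite $\rho=-\tfrac12\,\sinh(s(b-a))/(\sinh(sa)\cosh(sb))$. Bounding $\cosh(sb)$ by $\cosh(s(b-a))$ is awkward, so we would instead use $|\sinh(s(b-a))|/|\sinh(sa)\cosh(sb)|$ with $\sinh(sa)\cosh(sb)$ dominating $\sinh(s(b-a))\cdot a/(b-a)$ in modulus, in the spirit of Lemma \ref{lem_2}. The target bound is $(b-a)/(2a)$, which is exactly what the stated estimate requires.

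Finally, Plancherel applied to $\hat h^k=\rho^k\hat h^0$ with $\sup_\omega|\rho|\le C$ gives $\|h^k\|_{L^2(0,\infty)}\le C^k\|h^0\|_{L^2(0,\infty)}$. Causality (the value on $[0,T]$ depends only on data on $[0,T]$) lets us extend $h^0$ by zero beyond $T$ and restrict back, giving the estimate on $\Gamma_T$.

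\textbf{Main obstacle.} The hard step is verifying that $s(\omega)$ lands in the sector $\mathbb{D}$ of Lemma \ref{lem_3} (up to symmetry) for all real $\omega$. The delay factor $\lambda e^{-i\omega\tau}$ makes $s^2$ wander near the negative real axis, where $\Re s$ is small and $\Im s$ large, so the sector condition can fail. If that happens, we would first try to move to a Laplace variable $\sigma+i\omega$ with $\sigma$ large, i.e.\ to weighted $L^2$ norms with weight $e^{-\sigma t}$, for which $\Re s\ge|\Im s|$ holds. We would then recover the unweighted estimate on the finite interval $[0,T]$, accepting that this may require an adjustment of the constant.
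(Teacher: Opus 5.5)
Your plan is the same as the paper's: Fourier transform in time, the factor $\rho=\sinh(s(a-b))/(2\sinh(sa)\cosh(sb))$, Lemma~\ref{lem_3} together with $|\cosh(sb)|\ge1$, then Plancherel. The step you flag as the main obstacle is a genuine gap, and it cannot be closed.

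Your $s(\omega)^2=-(\omega^2+\lambda e^{-i\omega\tau})/c^2$ is correct. It gives $s=\pm(i\omega/c)\sqrt{1+\lambda e^{-i\omega\tau}/\omega^2}$, so $|\Im s|\sim|\omega|/c$ while $|\Re s|\le|\lambda|/(2c|\omega|)+O(|\omega|^{-3})$. At $\omega=0$ with $\lambda>0$, $s=i\sqrt{\lambda}/c$ is purely imaginary. So $s$ lies outside $\mathbb{D}$ and all its reflections, and $\rho$ is not even bounded on $\mathbb{R}$:
\begin{itemize}
\item For $a=2b$ one has $\rho=1/(4\cosh^2(sb))$, and $|\cosh(sb)|^2=\sinh^2(b\Re s)+\cos^2(b\Im s)=O(\omega^{-2})$ each time $b|\Im s|$ crosses $(n+\tfrac12)\pi$. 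Hence $|\rho|$ grows like $\omega^2$ along a sequence of frequencies.
\item Even at $\omega=0$, with the paper's data $a=4$, $b=2$, $c=1$, $\lambda=1.6$, one gets $\rho(0)=\tfrac12\bigl(1-\tan(b\sqrt{\lambda}/c)/\tan(a\sqrt{\lambda}/c)\bigr)\approx0.37>\tfrac14$.
\end{itemize}

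The paper's proof gets past this point only through a sign slip. It sets $\xi^2=(\omega^2+\lambda e^{-i\omega\tau})/c^2$ for $\hat e_{xx}+\xi^2\hat e=0$, but then writes the solutions with $\sinh(\xi x)$. This makes $\xi$ nearly real, whereas the correct variable is your $s=\pm i\xi$. Its claim $|\cosh(b\xi)|>1$ whenever $\Re\xi>0$ is also false off the sector (take $\xi=\varepsilon+i\pi/(2b)$). So the paper's route gives you no way past your obstacle.

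Your weighted fallback does not rescue the stated constant either. With $p=\sigma+i\omega$ you get $s\approx p/c$, so $\Im s/\Re s\approx\omega/\sigma$, and the sector condition still fails for $|\omega|>\sigma$. What the weight does give is $\Re s\ge(\sigma-\varepsilon)/c$, hence $|\rho|\le Ce^{-2\min(a,b)\sigma/c}$. Removing the weight on $[0,T]$ then yields roughly $\|h^k\|_{L^2(\Gamma_T)}\lesssim C^ke^{\sigma(T-2k\min(a,b)/c)}\|h^0\|_{L^2(\Gamma_T)}$. That is the finite-step statement of Theorem~\ref{thm_2}, not a $T$-independent contraction.

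No argument can do better, because the estimate as stated already fails for $k=1$. Take $a>b$.
\begin{itemize}
\item The expansion in the proof of Theorem~\ref{thm_2} gives $\rho=e^{-2bX}-e^{-2aX}-e^{-4bX}+\cdots$ with $X=\sqrt{p^2-\lambda e^{-\tau p}}/c$, and every term other than $e^{-2bX}$ has exponent at least $\min\{2a,4b\}$.
\item By the Remark after Theorem~\ref{thm_2}, the inverse transform of $e^{-\gamma X}$ is $\delta(t-\gamma/c)$ plus terms supported in $t\ge\gamma/c+\tau$.
\item Hence $h^1(t)=h^0(t-2b/c)H(t-2b/c)$ on $[0,T]$ whenever $T<\min\{2a/c,4b/c,2b/c+\tau\}$.
\item Choose such a $T>2b/c$ and $h^0$ supported in $[0,T-2b/c]$. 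Then $\|h^1\|_{L^2(\Gamma_T)}=\|h^0\|_{L^2(\Gamma_T)}$, contradicting the claimed factor $(a-b)/(2a)<\tfrac12$.
\end{itemize}
For the paper's data ($a=4$, $b=2$, $c=1$, $\tau=3$), any $T\in(4,7)$ works. The case $a<b$ is the same with leading term $-e^{-2aX}$, and it contradicts the claimed factor whenever $b<3a$.
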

\begin{proof}
We apply Fourier transform to the error equations (\ref{eq_2})--(\ref{eq_3}) in time, for $k=1,2,3\ldots$, to obtain:\\
Dirichlet Part:
\begin{equation}\label{eq_5}
\left\{\begin{array}{rl}
\frac{\partial^2 \hat{e}_1^k(x, \omega)}{\partial x^2} 
+ \frac{\omega^2+\lambda e^{-i\omega \tau}}{c^2} 
\hat{e}_1^k(x, \omega) &= 0, \\
\hat e_1^k(-a,\omega) &=0,\\
\hat e_1^k(0,\omega) &=\hat h^{k-1}(\omega),
\end{array}\right.
\end{equation}\\
Neumann Part:

\begin{equation}
\label{eq_6}
\left\{\begin{array}{rl}
\frac{\partial^2 \hat{e}_2^k(x, \omega)}{\partial x^2} 
+ \frac{\omega^2 + \lambda e^{-i\omega \tau}}{c^2} 
\hat{e}_2^k(x, \omega) &= 0, \\
\partial_x \hat e_2^k(0, \omega) &=\partial_x \hat e_1^k(0, \omega), \\
\hat e_2^k(b, \omega) &=0, 
\end{array}\right.
\end{equation}\\
the update condition transforms to, 
\begin{equation}\label{eq_7}
\hat h^{k}(\omega)=\theta \hat e_2^{k}(0,\omega) +(1-\theta)\hat h^{k-1}(\omega),
\end{equation}\\
where,
\[
\hat{e}_j^k(x, \omega) = \frac{1}{2\pi} \int_{-\infty}^\infty e_j^k(x, t) e^{-i\omega t} \, dt.
\]
Assuming $\xi ^2=\left ( \frac{\omega^2+\lambda e^{-i\omega \tau}}{c^2} \right )$, we solve the Dirichlet and Neumann boundary value problems to obtain,
\begin{align*}
\hat e_1^k(x,\omega)=\frac{\hat h^{k-1}(\omega)\sinh(\xi (a+x))}{\sinh(\xi a)},
&\quad \hat e_2^k(x,\omega)=-\frac{\hat h^{k-1}(\omega)\coth(\xi a)\sinh(\xi (b-x))}{\cosh(\xi b)},
\end{align*}
and then using the update step we get, \\
\begin{equation*}
 \hat h^k (\omega)=\{ 1-\theta -\theta \coth(\xi a)\tanh(\xi b) \}^k \hat h^0(\omega).   
\end{equation*}
We now focus on the case $\theta=1/2$, that is based on the outcomes in \citet{mandal2025dirichlet} and define 
\begin{equation*}
    Q(\omega)=\tanh(b\xi)\coth(a\xi)-1=\frac{\sinh((b-a)\xi)}{\sinh(a\xi)\cosh(b\xi)},
\end{equation*}
to have \begin{equation*}
  \hat h^k(\omega)=(-1)^k(1/2)^kQ^k(w)\hat h^0(\omega).  
\end{equation*}
\textbf{Case 1:} When $a>b$, we obtain $\left | \frac{\sinh((b-a)\xi)}{\sinh(a\xi)\cosh(b\xi)}\right |< \left | \frac{\sinh((a-b)\xi)}{\sinh(a\xi)}\right |$, as $|\cosh(b\xi)|>1$ for $\Re (\xi)>0$. Therefore, by Lemma \ref{lem_3}, we get $|\frac{\sinh(a-b)\xi}{\sinh(a\xi)}|<\frac{a-b}{a}$.\\~\\
Then, by applying the Parseval-Plancherel identity, we obtain
$$\left\| h^k\right\|_{L^2(\Gamma _T)}\leq \left ( \frac{a-b}{2a} \right )^k\left\| h^0\right\|_{L^2(\Gamma_T)}.$$
\textbf{Case 2:} Let $a<b$. For $b-a<a$ we obtain similarly $\left | \frac{\sinh((b-a)\xi)}{\sinh(a\xi)\cosh(b\xi)}\right |<\frac{b-a}{a}$ using Lemma \ref{lem_3}. In other cases, we use Lemma 2.6 of \citet{gobinda} 
to conclude $\left | \frac{\sinh((b-a)\xi)}{\sinh(a\xi)\cosh(b\xi)}\right |<\frac{b-a}{a}$. Therefore we get,
$$\left\| h^k\right\|_{L^2(\Gamma _T)}\leq \left ( \frac{b-a}{2a} \right )^k\left\| h^0\right\|_{L^2(\Gamma_T)}.$$
Hence the estimates.
\end{proof}
\subsection{Convergence using Laplace Transform}
In this section, we employ the Laplace transform to derive a sharp convergence estimate dependent on the time window size $T$. We first recall the necessary convolution and translation properties, which are subsequently applied to demonstrate the finite termination of the algorithm.\\

\noindent\textbf{Definition (Convolution):}  
The convolution \citep{schiff} of two piecewise continuous functions $f(t)$ and $g(t)$ is defined as:
\begin{equation}\label{convolution}
    (f * g)(t) := \int_{0}^{t} f(\tau) g(t - \tau) \, d\tau.
\end{equation}
Suppose $F(s)$ and $G(s)$ represent the Laplace transforms of the functions $f(t)$ and $g(t)$. The convolution theorem states that the inverse Laplace transform of the product $F(s)G(s)$ is given by the convolution $(f * g)(t)$.\\
\noindent \textbf{Second Translation Theorem:} 
The second translation theorem \citep{schiff} for inverse Laplace transform is given as:
\begin{equation}\label{eq_12}
    \mathcal{L}^{-1} \left( e^{-as} P(s) \right) = H(t-a) p(t - a),
\end{equation}
for  \( a \geq 0 \) and \( P(s) = \mathcal{L}(p(t)) \). The function \( H(t) \) is the Heaviside step function with the form:
\[
H(t) =
\begin{cases}
1, & t \geq 0, \\
0, & t < 0.
\end{cases}
\]

\begin{lemma}\label{lem_efros}
 The inverse Laplace transform of $G(s) =e^{- \alpha\sqrt{s^2 - \lambda e^{-\tau s}}}$ is,
    \begin{equation}\label{inverselaplace}
\mathcal{L}^{-1}\left\{ e^{-\alpha \sqrt{s^2-\lambda e^{-\tau s}}}\right\}= D(\alpha,0,t-\alpha)+\sum_{n\in \mathbb{N}} D(\alpha,\lambda,t-\alpha-n\tau),
\end{equation}
where the term $D(\alpha,\lambda,t-\alpha-n\tau)$ comprises time-shifts and behaves similarly to Heaviside functions.
\end{lemma}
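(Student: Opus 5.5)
The plan is to expand the delay term in powers of $\lambda e^{-\tau s}$ and then invert term by term, using Efros-type shifting together with the Second Translation Theorem (\ref{eq_12}). For $\Re(s)$ large we have $|\lambda e^{-\tau s}/s^2|<1$, so we write
\begin{equation*}
\sqrt{s^2-\lambda e^{-\tau s}} = s\sqrt{1-\lambda e^{-\tau s}s^{-2}} = s - \frac{\lambda e^{-\tau s}}{2s} - \frac{\lambda^2 e^{-2\tau s}}{8s^3}-\cdots .
\end{equation*}
This factors the kernel as
\begin{equation*}
G(s)=e^{-\alpha s}\exp\Big(\alpha\big(s-\sqrt{s^2-\lambda e^{-\tau s}}\big)\Big).
\end{equation*}
The leading factor $e^{-\alpha s}$ produces, through (\ref{eq_12}), the pure shift $t\mapsto t-\alpha$. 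This term corresponds to $D(\alpha,0,t-\alpha)$, i.e.\ the $\lambda=0$ kernel, which is a delta/Heaviside at $t=\alpha$.

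Next I expand the second exponential as a power series in $z:=\lambda e^{-\tau s}$:
\begin{equation*}
\exp\big(\alpha(s-\sqrt{s^2-z})\big)=1+\sum_{n\ge1} c_n(\alpha,s)\,\lambda^n e^{-n\tau s}.
\end{equation*}
The function $z\mapsto \exp(\alpha(s-\sqrt{s^2-z}))$ is analytic for $|z|<|s|^2$, and each coefficient $c_n(\alpha,s)$ is a finite combination of terms $\alpha^j s^{-(2n-j)}$. These are rational in $s$ and decay as $\Re s\to\infty$, so they have explicit inverse transforms, namely polynomials in $t$; equivalently they are generated by Bessel-type kernels $\mathcal L^{-1}\{s^{-m}\}$. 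Applying (\ref{eq_12}) to $e^{-(\alpha+n\tau)s}c_n(\alpha,s)$ gives $H(t-\alpha-n\tau)\,p_n(t-\alpha-n\tau)$. I define $D(\alpha,\lambda,t-\alpha-n\tau)$ to be this term. It then vanishes for $t<\alpha+n\tau$, which is the claimed Heaviside-like behaviour, and summing over $n$ gives (\ref{inverselaplace}).

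Finally I justify the term-by-term inversion. On any finite window $[0,T]$, only indices with $n<(T-\alpha)/\tau$ contribute, so the sum is finite pointwise. Term-by-term inversion then follows from uniform convergence of the series on a vertical line $\Re s=\sigma$ with $\sigma$ large, which allows interchanging the Bromwich integral with the sum. Alternatively, one can verify uniqueness by taking the Laplace transform of the finite-window sum.

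The main obstacle is controlling the coefficients $c_n$ so that the series converges absolutely on the Bromwich line and each $c_n$ is genuinely a Laplace transform. The $\alpha s$ growth cancels exactly in $s-\sqrt{\cdot}$, which leaves only negative powers of $s$, but these must be bounded uniformly in $n$. My first attempt would be Cauchy estimates on the circle $|z|=|s|^2/2$, which give $|c_n|\le C\,2^n|s|^{-2n}e^{c\alpha|s|^{-1}}$. That bound suffices for $\sigma>\sqrt{2|\lambda|}$.
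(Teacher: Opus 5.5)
Your main computation is correct, and it takes a different route from the paper.

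\textbf{Comparison with the paper.} The paper applies Efros' theorem with $\hat p(s)=e^{-\alpha\sqrt{s}}$ and $q(s)=s^2-\lambda e^{-\tau s}$. It expands $e^{\lambda\xi e^{-\tau s}}$ under the $\xi$-integral, identifies the $\lambda^0$ term with the undelayed kernel $e^{-\alpha s}$, and then simply asserts that the remaining terms are shifted by $n\tau$. $D$ is never computed. The step is also formal: Efros' theorem needs $e^{-\xi s^2}e^{\lambda\xi e^{-\tau s}}$ to be a one-sided Laplace transform in $t$, and it is not, since it grows like $e^{\xi(\Im s)^2}$ on vertical lines.

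You instead factor out $e^{-\alpha s}$ and expand $\exp\bigl(\alpha(s-\sqrt{s^2-z})\bigr)$ directly in $z=\lambda e^{-\tau s}$. This is a rigorous counterpart of the first Remark after Theorem~\ref{thm_2}. What it buys is an explicit kernel: $c_n=\sum_{j=1}^{n}b_{n,j}\alpha^j s^{-(2n-j)}$. Comparing with the classical $I_1$ formula gives $p_n(u)=\frac{\alpha}{2^{2n-1}n!(n-1)!}(u^2+2\alpha u)^{n-1}$ with $u=t-\alpha-n\tau$. So the vanishing for $t<\alpha+n\tau$ is proved, not argued by analogy.

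\textbf{The gap: your justification paragraph fails as written.}

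\emph{The Cauchy estimate is wrong.} On $|z|=|s|^2/2$ one has $|s-\sqrt{s^2-z}|=|s|\,|1-\sqrt{1-w}|$ with $|w|=1/2$, which is of order $|s|$. Cauchy's estimate therefore yields a factor $e^{c\alpha|s|}$, not $e^{c\alpha|s|^{-1}}$, and the bound explodes along the Bromwich line.

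\emph{The interchange is not justified.} Uniform convergence on an infinite line does not by itself license swapping sum and integral. Moreover, the $n=0$ and $n=1$ terms are not absolutely integrable on the line: $|e^{-\alpha s}|$ is constant there, and $c_1=\alpha/(2s)$ decays only like $|s|^{-1}$.

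\emph{First repair.} Use the radius $|z|=|s|$. For $|s|\ge2$ this gives $|s-\sqrt{s^2-z}|\le1$, hence $|c_n|\le e^{\alpha}|s|^{-n}$. That yields an $O(|s|^{-2})$ majorant for the tail $n\ge2$, and the terms $n=0,1$ can be inverted by hand.

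\emph{Cleaner repair.} Use your ``alternatively'' route, but supply the missing growth bound: local finiteness on $[0,T]$ does not control the Laplace integral over $[0,\infty)$. Positivity gives the bound. Since $1-\sqrt{1-w}$ has nonnegative Taylor coefficients, $b_{n,j}\ge0$ and $p_n\ge0$, so $\int_0^\infty e^{-\sigma u}p_n(u)\,du=c_n(\alpha,\sigma)$. Hence
\[
\sum_{n\ge1}|\lambda|^n e^{-(\alpha+n\tau)\sigma}c_n(\alpha,\sigma)=e^{-\alpha\sigma}\Bigl(\exp\bigl(\alpha(\sigma-\sqrt{\sigma^2-|\lambda|e^{-\tau\sigma}})\bigr)-1\Bigr)<\infty
\]
for $\sigma>\sqrt{|\lambda|}$. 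Fubini then shows that $\delta(t-\alpha)+\sum_{n\ge1}\lambda^n H(t-\alpha-n\tau)p_n(t-\alpha-n\tau)$ has Laplace transform $G(s)$ for $\Re s\ge\sigma$. Uniqueness of the Laplace transform finishes the proof.
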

\begin{proof}
  We apply Efros theorem \citep{sana2023dirichlet} which states
if \( \hat{p}(s) \) and \( \hat{r}(s) \exp(-q(s) \xi) \) represent the Laplace transforms of \( p(t) \) and \( r(t, \xi) \) with respect to \( t \), with \( \xi \) being a parameter, then 
\begin{equation*}
    \mathcal{L}^{-1}\left\{\hat{p}(q(s)) \hat{r}(s)\right\}=\int_{0}^{\infty} r(t, \xi) p(\xi) \, d\xi.
\end{equation*}
Assume $r(s)=1$ and $p(s)=e^{-\alpha \sqrt{s}}$ and $q(s)=s^2-\lambda e^{-\tau s}.$
Then we have for $\alpha>0$,
\begin{equation*}
    \mathcal{L}^{-1}\left \{\hat p(q(s)) \hat r(s) \right \} =\mathcal{L}^{-1}\left\{ e^{-\alpha \sqrt{s^2-\lambda e^{-\tau s}}}\right\}=\int_{0}^{\infty}\mathcal{L}^{-1}\left \{ e^{-s^2\xi}e^{\lambda \xi e^{-\tau s}}\right \}\mathcal{L}^{-1}\left \{e^{-\alpha \sqrt{s}}\right \}d\xi.
\end{equation*}
We have by \citet{oberhettinger},
$\mathcal{L}^{-1} \left \{\exp(-\alpha \sqrt{s})\right \}=\frac{\alpha}{2 \sqrt{\pi} t^{3/2}} \exp\left( - \frac{\alpha^2}{4t} \right)$ and $\mathcal{L}^{-1}\left \{e^{-\xi s^2} \right \}=\frac{1}{2\sqrt{\pi\xi}}e^{-t^2/4\xi}.$
Therefore we obtain, 
\begin{equation*}
    \begin{aligned}
&\int_{0}^{\infty}\mathcal{L}^{-1}\left \{ e^{-s^2\xi}e^{\lambda \xi e^{-\tau s}}\right \}\mathcal{L}^{-1}\left \{e^{-\alpha \sqrt{s} }\right \}d\xi =\\
&\int_{0}^{\infty}\mathcal{L}^{-1}\left \{ e^{-s^2\xi}+\lambda \xi e^{-s^2\xi} e^{-\tau s} +\frac{(\lambda \xi)^2 e^{-s^2\xi}e^{-2\tau s}}{2!}+ \cdots \right \}\frac{\alpha}{2 \sqrt{\pi} \xi^{3/2}} \exp\left( - \frac{\alpha^2}{4\xi} \right)d\xi \\
&=\frac{\alpha}{2 \sqrt{\pi}}\int_{0}^{\infty}\mathcal{L}^{-1}\left \{ e^{-s^2\xi}\right\}\frac{1}{\xi^{3/2}} \exp\left( - \frac{\alpha^2}{4\xi} \right)d\xi + \sum_{n\in \mathbb{N}} D(\alpha,\lambda,t-\alpha-n\tau),
    \end{aligned}
\end{equation*}
where the first term corresponds to the case $\lambda=0$ or the classical wave equation case without the delay term. Now for $\lambda=0$, we will simply obtain the term $\mathcal{L}^{-1} \left \{\exp(-\alpha s)\right \}\hat h^0(s)$, as studied in \citet{dd22}. This first term thus produces a Heaviside function because of the second translation theorem \eqref{eq_12}. Thus the other terms in $D(\alpha,\lambda,t-\alpha-n\tau)$ also comprise consecutive time-shifts due to the presence of the term $\exp(-n\tau s)$, as illustrated in Fig \ref{timeshift}. Therefore, we obtain, 
\begin{equation}\label{inverselaplace}
\mathcal{L}^{-1}\left\{ e^{-\alpha \sqrt{s^2-\lambda e^{-\tau s}}}\right\}= D(\alpha,0,t-\alpha)+\sum_{n\in \mathbb{N}} D(\alpha,\lambda,t-\alpha-n\tau).
\end{equation}
\end{proof}
\begin{theorem}\label{thm_2}
    Let $a$ and $b$ denote the widths of the two subdomains and $c$ be the wave speed. Applying the DNWR algorithm to the time-delayed wave PDE \eqref{eq_2}--\eqref{eq_4} with $\theta = 1/2$ guarantees convergence within $k+1$ iterations, subject to the condition that the time window $T$ satisfies:
    \[
        \frac{T}{k} \leq 2 \min\left\{ \frac{a}{c}, \frac{b}{c} \right\}.
    \]
\end{theorem}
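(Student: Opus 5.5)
We work with the Laplace transform in time rather than the Fourier transform. Applying $\mathcal{L}$ to the error equations \eqref{eq_2}--\eqref{eq_3} with zero history data gives the ODEs $\partial_{xx}\hat e_j^k - \frac{s^2-\lambda e^{-\tau s}}{c^2}\hat e_j^k=0$. Set $\sigma(s):=\sqrt{s^2-\lambda e^{-\tau s}}/c$, the branch with $\Re\sigma>0$ for $\Re s$ large. Exactly as in the proof of Theorem \ref{thm:thmdnwr}, with $\xi$ replaced by $\sigma$, we get
\[
\hat h^k(s)=\Big(-\tfrac12\Big)^k\Big(\frac{\sinh((b-a)\sigma)}{\sinh(a\sigma)\cosh(b\sigma)}\Big)^k\hat h^0(s).
\]

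Next we expand this factor in exponentials. Write $\sinh$ and $\cosh$ in terms of $e^{\pm\sigma}$ and factor out the leading growth $e^{a\sigma}e^{b\sigma}$ of the denominator:
\[
\frac{\sinh((b-a)\sigma)}{\sinh(a\sigma)\cosh(b\sigma)}
=2\,\frac{e^{-2a\sigma}-e^{-2b\sigma}}{(1-e^{-2a\sigma})(1+e^{-2b\sigma})}.
\]
Since $|e^{-2a\sigma}|,|e^{-2b\sigma}|<1$ for $\Re s$ large, expand the denominators as geometric series. Raising to the $k$-th power gives
\[
\hat h^k(s)=(-1)^k\big(e^{-2a\sigma}-e^{-2b\sigma}\big)^k\sum_{m,n\ge0}e^{-2am\sigma}(-1)^n e^{-2bn\sigma}\,\hat h^0(s)\cdot(\text{combinatorial coefficients}).
\]
Each term is then of the form $e^{-\alpha\sigma}\hat h^0(s)$ with $\alpha\ge 2k\min\{a,b\}$. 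Indeed, the numerator factor $(e^{-2a\sigma}-e^{-2b\sigma})^k$ already contributes an exponent of at least $2k\min\{a,b\}$, and the series only increase it.

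For each such term we apply Lemma \ref{lem_efros} with $\alpha$ replaced by $\alpha/c$, since $e^{-\alpha\sigma}=e^{-(\alpha/c)\sqrt{s^2-\lambda e^{-\tau s}}}$. It gives
\[
\mathcal{L}^{-1}\{e^{-\alpha\sigma}\}=D(\alpha/c,0,t-\alpha/c)+\sum_{n}D(\alpha/c,\lambda,t-\alpha/c-n\tau),
\]
and every piece vanishes for $t<\alpha/c$. By the convolution theorem, $h^k$ is a sum of convolutions of $h^0$ with kernels supported in $[\alpha/c,\infty)\subset[2k\min\{a,b\}/c,\infty)$. Hence $h^k(t)=0$ for $t<2k\min\{a,b\}/c$.

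Now take $T\le 2k\min\{a/c,b/c\}$, which is the hypothesis. Then $h^k\equiv0$ on $(0,T)$, so the next Dirichlet and Neumann solves reproduce the exact solution and the method converges in $k+1$ iterations.

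The main obstacle is justifying the support claim rigorously. Lemma \ref{lem_efros} only describes the kernels qualitatively as time-shifted pieces. We need the sharper statement that the inverse transform of $e^{-\alpha\sigma}$ vanishes identically for $t<\alpha/c$, and that the infinite double series may be inverted term by term. For the first point we would use a Paley–Wiener argument. The function $e^{\alpha s/c}e^{-\alpha\sigma}$ is bounded on half-planes $\Re s\ge s_0$, because $\sigma-s/c=O(1/s)$ as $|s|\to\infty$. Shifting the Bromwich contour to $\Re s\to+\infty$ then shows that the inverse vanishes for $t<\alpha/c$. For the second point, only finitely many terms have $\alpha<cT$, so on $(0,T)$ we can truncate the series and discard the tail. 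This is exactly what is needed, and it avoids any convergence issue.
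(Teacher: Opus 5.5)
Your proposal is correct and follows essentially the paper's route: Laplace transform, expansion of the $\theta=1/2$ amplification factor into exponentials $e^{-\alpha\sigma}$ with $\alpha\ge 2k\min\{a,b\}$, and inversion via Lemma \ref{lem_efros} and the convolution theorem to get $h^k\equiv 0$ on $(0,T)$. Your Paley--Wiener bound on $e^{\alpha s/c}e^{-\alpha\sigma}$ supplies the support property that the paper only asserts qualitatively through the Efros lemma, so it tightens rather than changes the argument; note that discarding the tail also needs this bound applied to the whole tail, which works because the geometric series converge absolutely on a right half-plane.
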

\begin{proof}
By taking the Laplace transform of the subproblems \eqref{eq_2}--\eqref{eq_3} and of the interface update condition \eqref{eq_4}, we derive the following solutions on subdomain $\Omega_1,\Omega_2$:
\begin{align*}
\hat e_1^k(x,s) &=\frac{\hat h^{k-1}(s)\times \sinh 
 \left(\frac{\sqrt{s^2-\lambda e^{-\tau s}}(x+a)}{c }\right)}{ \sinh\left(\frac{a\sqrt{s^2-\lambda e^{-\tau s}}}{c} \right)},
\\
\\
\hat e_2^k(x,s) &=\frac{\hat h^{k-1}(s)\times \coth\left(\frac{a\sqrt{s^2-\lambda e^{-\tau s}}}{c} \right)\times \sinh \left(\frac{\sqrt{s^2-\lambda e^{-\tau s}}(b-x)}{c}\right)}{ \cosh\left(\frac{b\sqrt{s^2-\lambda e^{-\tau s}}}{c}\right)},
\end{align*}
now by induction, we derive the following update condition:
\begin{equation*}
    \hat h^k(s)=\left(1-\theta-\theta \tanh \left(\frac{b\sqrt{s^2-\lambda e^{-\tau s}}}{c}\right)\times  \coth\left(\frac{a\sqrt{s^2-\lambda e^{-\tau s}}}{c}\right)\right)^k\hat h^0(s).
\end{equation*}
We rewrite the update step as,
\begin{equation}\label{eq_11}
   \hat h^k(s)=\left(1-2\theta- \theta K^a_b(s) \right)^k\hat h^0(s),  
\end{equation}
where we assume the form of the kernel, \begin{equation*}
  K^a_b(s)=\left( \coth\left(\frac{a\sqrt{s^2-\lambda e^{-\tau s}}}{c}\right) \times \tanh \left(\frac{b\sqrt{s^2-\lambda e^{-\tau s}}}{c}\right)-1\right).   
\end{equation*}
For the ease of simplification, we assume $X=\frac{\sqrt{s^2-\lambda e^{-\tau s}}}{c}$ and then expanding the hyperbolic functions in terms of exponential series yields,
\begin{equation*}
    K^a_b(s)=2\sum_{m=1}^{\infty }e^{-2amX}-2\sum_{n=1}^{\infty }(-1)^{n-1}e^{-2nbX}-4\sum_{m=1}^{\infty }\sum_{n=1}^{\infty }(-1)^{n-1}e^{-2(nb+ma)X}.
\end{equation*}
So, we obtain from equation \eqref{eq_11}, replacing $\theta=1/2$,
\begin{align*}
    \hat{h}^k(s) &= \left(-\frac{1}{2}\right)^k \left(K^a_b(s)\right)^k \hat{h}^0(s)= \left(-1\right)^k \bigg\{ \sum_{m=1}^{\infty } e^{-2amX} 
    - \sum_{n=1}^{\infty } (-1)^{n-1} e^{-2nbX} \notag \\
    &\quad - 2 \sum_{m=1}^{\infty } \sum_{n=1}^{\infty } (-1)^{n-1} e^{-2(nb+ma)X} \bigg\}^k\hat{h}^0(s).
\end{align*}
Thus, we simplify
\begin{align*}
\hat h^k(s) = \Bigg[ & -e^{-2aX} + e^{-2bX}
                      + \Bigg( \sum_{m>1}^{\infty} e^{-2amX} 
                              - \sum_{n>1}^{\infty} (-1)^{n-1} e^{-2nbX} - 2\sum_{m=1}^{\infty} \sum_{n=1}^{\infty} (-1)^{n-1} 
                              e^{-2(nb+ma)X} \Bigg) \Bigg]^k \; \hat h^0(s),
\end{align*}
further using binomial expansion, we obtain,
\begin{align*}
\hat h^k(s) = & (-1)^k e^{-2akX} \hat h^0(s) 
                 + e^{-2bkX} \hat h^0(s)
                 + \Bigg( \sum_{l>k}^{\infty } P_l^{(k)} e^{-2blX} + \sum_{l>k}^{\infty } Q_l^{(k)} e^{-2alX}
                  + \sum_{\substack{m+n \geq k \\ m,n \geq 1}}^{\infty } 
                          R_{m,n}^{(k)} e^{-2(nb+ma)X} \Bigg) \hat h^0(s),
\end{align*}
therefore, the inverse Laplace of above equation yields
\begin{equation}
    h^k(t)=(-1)^k \mathcal{L}^{-1}\left\{ e^{-2akX}\hat h^0(s)\right \}+\mathcal{L}^{-1}\left\{ e^{-2bkX}\hat h^0(s)\right\}+\mathcal{L}^{-1}\left\{ \text{other terms}\right\}.
    \label{eq11a}
\end{equation}
Now, using the convolution theorem (\ref{convolution}), we have:
$\mathcal{L}^{-1}\{G(s) \hat h^0(s)\} = (g * h^0)(t),$ where,
\begin{equation}\label{inversecrucial}
   G(s) = e^{- \alpha\sqrt{s^2 - \lambda e^{-\tau s}}}\ \ \text{and} \ \ F(s)=\hat h^0(s). 
\end{equation}
We now calculate the expression of the inverse Laplace of $G(s) =e^{- \alpha\sqrt{s^2 - \lambda e^{-\tau s}}}$ using Lemma \ref{lem_efros} which gives,

\begin{equation}\label{inverselaplace}
\mathcal{L}^{-1}\left\{ e^{-\alpha \sqrt{s^2-\lambda e^{-\tau s}}}\right\}= D(\alpha,0,t-\alpha)+\sum_{n\in \mathbb{N}} D(\alpha,\lambda,t-\alpha-n\tau).
\end{equation}
In particular, the presence of the delay $\tau$ introduces terms of the form 
$\sum_{n\in \mathbb{N}} D(\alpha,\lambda,t-\alpha-n\tau)$, which represent a sequence of time-shifted contributions. This structure reflects a delay-induced memory effect, where the solution depends on past states and leads to a progressive shift in time.
Therefore, we obtain from \eqref{eq11a}, 
\begin{align*}
h^k(t)&=(-1)^k a_1H(t-2ak/c)h^0(t-2ak/c)+a_2H(t-2bk/c)h^0(t-2bk/c)\\
&+(-1)^ka_3H(t-2ak/c-\tau)h^0(t-2ak/c-\tau)+a_4H(t-2bk/c-\tau)h^0(t-2bk/c-\tau)\\
&+\sum_{\ell>k}\sum_{n\geq0}\left[p_{\ell,n}^{(k)}H(t-n\tau-2al/c)h^0(t-n\tau-2a\ell/c)\right.\\
&\left.+q_{\ell,n}^{(k)}H(t-n\tau-2b\ell/c)h^0(t-n\tau-2b\ell/c)\right]\\
&+\sum_{m+\nu\geq k}\sum_{n\geq0}r_{m,\nu,n}^{(k)}H(t-n\tau-2(am+b\nu)/c)h^0(t-n\tau-2(am+b\nu)/c).
\end{align*}
Here $a_i, p_{\ell,n}^{(k)}, q_{\ell,n}^{(k)}, r_{m,\nu,n}^{(k)}$ are real constants. Because the first term in our analysis contains no delay, it behaves similarly to a standard wave equation and error becomes identically zero when $T\leq \alpha$. Similarly, the error associated with the summation terms vanishes when $T \leq \alpha + n\tau$. \\Now if one chooses the time window \(T\) such that
\[
\frac{T}{k} \leq 2 \min\left\{\frac{a}{c}, \frac{b}{c}\right\},
\]
then $h^k(t)\equiv 0$ in $[0,T]$. Therefore, the DNWR algorithm provides the desired solution in another iteration. This completes the result. 
\end{proof}

\begin{remark}
    Another way to evaluate the inverse Laplace of the expression $e^{-\alpha \sqrt{s^2 - \lambda e^{-\tau s}}} \hat h^0(s)$ in \eqref{inversecrucial} is the following:\\
Let $G(s)$ be the kernel function and $g(t)$ its inverse transform:
$G(s) = e^{-\alpha \sqrt{s^2 - \lambda e^{-\tau s}}}$ with 
\begin{equation*}
    y(t) = (g * h^0)(t) = \int_0^t g(\xi) h^0(t-\xi) d\xi.
\end{equation*}
We then use the series expansion for the modified Bessel function of the first kind $I_1(z)$ and the identity for $\mathcal{L}^{-1}\{e^{-\alpha \sqrt{s^2 - k^2}}\}$. The parameter $k^2$ is replaced by the term $\lambda e^{-\tau s}$, that depends on $s$; we treat it as an operator or expand the expression using the series definition of $I_1(z)$:
\begin{equation*}
    I_1(z) = \sum_{m=0}^{\infty} \frac{(z/2)^{2m+1}}{m!(m+1)!}
\end{equation*}
Substituting $k = \sqrt{\lambda} e^{-\frac{\tau}{2} s}$ into the formal expansion of the transform:
\begin{equation*}
   G(s) = e^{-\alpha s} + \sum_{m=0}^{\infty} \frac{\alpha (\sqrt{\lambda} e^{-\frac{\tau}{2} s})^{2m+2}}{2^{2m+1} m!(m+1)!} \mathcal{L}\left\{ (t^2 - \alpha^2)^m u(t-\alpha) \right\}. 
\end{equation*}
Simplifying the powers of the exponential and the constants:
\begin{equation*}
    G(s) = e^{-\alpha s} + \sum_{m=0}^{\infty} \frac{\alpha \lambda^{m+1} e^{-(m+1)\tau s}}{2^{2m+1} m!(m+1)!} \int_{\alpha}^{\infty} e^{-st} (t^2 - \alpha^2)^m dt.
\end{equation*}
Applying \eqref{eq_12} to each term in the summation, the kernel $g(t)$ is derived as:
\begin{equation*}
    g(t) = \delta(t-\alpha) + \sum_{n=1}^{\infty} \frac{\alpha \lambda^n}{2^{2n-1} n! (n-1)!} \left[ (t - n\tau)^2 - \alpha^2 \right]^{n-1} H(t - \alpha - n\tau).
\end{equation*}
Finally, applying the convolution integral, the first term (the Dirac delta) shifts the function $h^0(t)$, and the summation terms involve shifted integrals:
\begin{equation*}
\begin{aligned}
&\mathcal{L}^{-1}\left\{ e^{-\alpha \sqrt{s^2-\lambda e^{-\tau s}}}\hat h^0(s)\right\} = h^0(t-\alpha)H(t-\alpha)\\
&+ \sum_{n=1}^{\infty} \frac{\alpha \lambda^n}{2^{2n-1} n! (n-1)!}
\left[ \int_{\alpha+n\tau}^t \left((\xi-n\tau)^2 - \alpha^2\right)^{n-1} h^0(t-\xi) d\xi \right] H(t - \alpha - n\tau).
\end{aligned}    
\end{equation*}
\end{remark}
\begin{remark}
  We illustrate the effect of time shifting property of the factor $e^{-2 \sqrt{s^2 - e^{-3s}}}$ through inverse Laplace transform in Fig.~\ref{timeshift}. 
The exponential factor $e^{-2\phi(s)}, \quad \text{with} \quad \phi(s)=\sqrt{s^2 - e^{-3s}},
$ induces a delay effect analogous to the classical time-shifting property of Laplace transforms. In particular, it implies that the resulting function is effectively shifted by approximately $2$ units in time. This shows that the contribution of the error is vanishing for times $t<2$. Consequently, if one restricts the analysis to a time window $(0,T)$ with $T<2$, the error does not enter the computational domain and is therefore effectively zero. This shifting property is the primary reason for the method's rapid convergence. It provides the theoretical justification for why Theorem \ref{thm_2} will successfully resolve in a definite number of iterations when the time window is appropriately bounded. 
Thus, the time-shifting property plays a central role: it ensures that, for sufficiently small time windows, the error is effectively “pushed out” of the domain after a 'finite' number of iterations, which explains the finite-step convergence behaviour.
\end{remark}
\begin{remark}
   The convergence behaviour of the DNWR method obtained using Fourier and Laplace framework can be interpreted differently:
    \begin{itemize}
        \item Fourier analysis approach, based on Parseval-Plancherel identities, yields a contraction factor strictly less than one, leading to \textbf{a linear convergence} estimate to the numerical error.
        \item In contrast, Laplace transform analysis reveals additional information through the time-shifting property. In particular, exponential factors in Laplace space correspond to delays in the time domain. As a consequence, the error is transported forward in time at each iteration. For sufficiently small time intervals, this implies that after a finite number of iterations, the error is completely shifted outside the time window, resulting in \textbf{finite-step convergence}.
    \end{itemize}

\end{remark}
    \begin{figure}
    \centering
    \includegraphics[width=0.7 \linewidth]
    {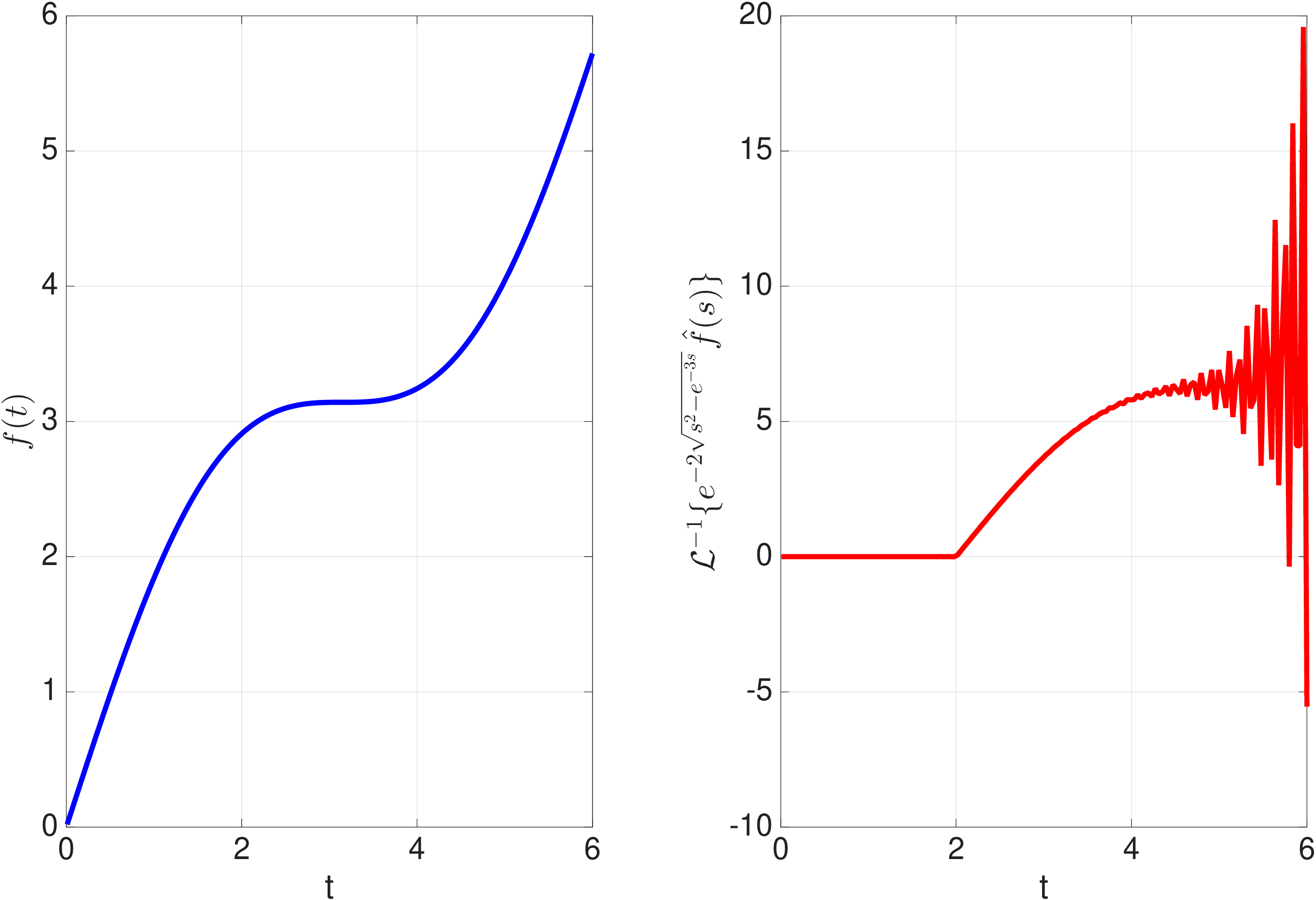}
     \caption{Illustration of the time-shifting effect on the function $f(t) = t+\sin(t)$. The left figure displays the original inverse transform $\mathcal{L}^{-1}\{ \hat f(s)\}$, while the right figure shows the transformed signal $\mathcal{L}^{-1}\{e^{-2 \sqrt{s^2 - e^{-3s}}}\hat f(s)\}$.}
     \label{timeshift}
    \end{figure}

\section{Convergence of DNWR for Heterogeneous Media}
We study the problem with a piecewise-constant wave speed, where \( c(x,t) = c_1 \) in region \( \Omega_1 \) and \( c(x,t) = c_2 \) in region \( \Omega_2 \). The error equations corresponding to the DNWR algorithm are as follows:\\
\textbf{Dirichlet Part:}
\begin{equation*}
 \left\{\begin{array}{rl}
\partial_{tt} e_1^k-c_1^2 \partial_{xx} ^2e_1-\lambda e_1(x, t-\tau ) &=0, \ \ \ (x, t)\in \Omega_1\times (0, T),  \\ 
   e_1^k(x, t)&=0, \  \ \ (x, t)\in \Omega_1\times [-\tau, 0], \\ 
    \partial_te_1^k(x, t)&=0,\ \ \  (x, t)\in \Omega_1\times[-\tau,0], \\
  e_1^k(-a, t)&=0, \ \ \ t\in(0, T),\\ 
e_1^k(\Gamma, t)&=h^{k-1}(t), \ \ \  t\in(0, T).
\end{array}\right.    
\end{equation*}
\textbf{Neumann Part:}
\begin{equation*}
  \left\{\begin{array}{rl}
\partial_{tt} e_2^k-c_2^2 \partial_{xx} ^2e_2-\lambda e_2(x, t-\tau )&=0, \ (x, t)\in \Omega_2\times (0, T),  \\ 
  e_2^k(x, t)&=0, \  (x, t)\in \Omega_2\times [-\tau, 0], \\ 
   \partial_te_2^k(x, t)&=0,\ \  (x, t)\in \Omega_2\times[-\tau,0], \\
e_2^k(b, t)&=0, \ \ \  t\in[0, T],\\ 
c_2^2\partial_x e_2^k(\Gamma , t)&=c_1^2\partial_x e_1^k(\Gamma , t), \  t\in(0, T).
\end{array}\right.  
\end{equation*}

The interface update condition is:
\begin{equation*}
h^{k}(x, t)=\theta e_2^{k}\mid _{\Gamma\times(0, T)} +(1-\theta) h^{k-1}(x, t).    
\end{equation*}
The DNWR method applied to the heterogeneous wave equation with time delay, under the specific geometric condition where the scaled subdomain sizes satisfy \( |\Omega_1|/{c_1} = |\Omega_2|/{c_2} \), exhibits a particularly efficient convergence rate. The result is the following:
\begin{theorem}
In DNWR for the heterogeneous wave equation with time delay, satisfying $|\Omega_1|/{c_1}=|\Omega_2|/{c_2}$, the method exhibits 2 step convergence for $\theta=1/(1+\frac{c_1}{c_2})$, regardless of the length of the time window.
 \label{thm:thmdnwrhetero}
 \end{theorem}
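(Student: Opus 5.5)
We plan to follow the Laplace-transform computation used in the proof of Theorem \ref{thm_2}, now keeping track of the two different wave speeds. We place the interface at $\Gamma=0$, so $\Omega_1=(-a,0)$ and $\Omega_2=(0,b)$. Taking the Laplace transform in time of the two error subproblems, with zero history on $[-\tau,0]$, gives ODEs in $x$. Their characteristic roots are $\sigma_j(s)=\sqrt{s^2-\lambda e^{-\tau s}}/c_j$ for $j=1,2$. Writing $\sigma(s)=\sqrt{s^2-\lambda e^{-\tau s}}$, the common factor carrying the delay, we have $\sigma_j=\sigma/c_j$. Exactly as before, the solutions are
\[
\hat e_1^k(x,s)=\hat h^{k-1}(s)\,\frac{\sinh(\sigma_1(x+a))}{\sinh(\sigma_1 a)},
\qquad
\hat e_2^k(x,s)=-\frac{c_1^2\sigma_1\coth(\sigma_1 a)}{c_2^2\sigma_2}\,\hat h^{k-1}(s)\,\frac{\sinh(\sigma_2(b-x))}{\cosh(\sigma_2 b)}.
\]
The second formula follows from the flux condition $c_2^2\partial_x\hat e_2^k=c_1^2\partial_x\hat e_1^k$ at $x=0$. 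Because $\sigma_1/\sigma_2=c_2/c_1$, the prefactor simplifies to $c_1/c_2$.

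Evaluating at the interface and inserting the result into the relaxation step, we obtain
\[
\hat h^k(s)=\Big(1-\theta-\theta\,\tfrac{c_1}{c_2}\coth\big(\tfrac{a\sigma}{c_1}\big)\tanh\big(\tfrac{b\sigma}{c_2}\big)\Big)\hat h^{k-1}(s).
\]
The key observation is that the hypothesis $|\Omega_1|/c_1=|\Omega_2|/c_2$ means $a/c_1=b/c_2$. The two arguments $a\sigma/c_1$ and $b\sigma/c_2$ therefore coincide, and $\coth\cdot\tanh\equiv 1$ for every $s$ in the half-plane where the transforms exist. This holds irrespective of the delay term, since the delay enters only through the common factor $\sigma(s)$. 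The amplification factor then reduces to the constant $1-\theta(1+c_1/c_2)$, so $\hat h^k(s)=\big(1-\theta(1+c_1/c_2)\big)^k\hat h^0(s)$. This vanishes identically for $k\ge1$ precisely when $\theta=1/(1+c_1/c_2)$.

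Inverting the transform gives $h^1\equiv0$ on the whole interval $(0,T)$, with no restriction on $T$, because no time-shift argument is needed. A second iteration then solves the Dirichlet and Neumann problems with the exact interface data and recovers the solution, which gives the claimed two-step convergence. The main point requiring care is the first step. One must justify the Laplace transform of the delayed equation, which uses the vanishing history so that $\mathcal{L}\{e(\cdot,t-\tau)\}=e^{-\tau s}\hat e$. One must also check that $\sinh(a\sigma/c_1)$ and $\cosh(b\sigma/c_2)$ do not vanish for $\Re s$ sufficiently large, which holds because $\sigma(s)\sim s$ there. Both are routine, so the identity $\coth(x)\tanh(x)=1$ carries the argument.
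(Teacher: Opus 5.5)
Your proposal is correct and follows the paper's proof essentially step for step. Both take the Laplace transform of the two subproblems and obtain the recursion $\hat h^k=\bigl(1-\theta-\theta\tfrac{c_1}{c_2}\coth(a\sigma/c_1)\tanh(b\sigma/c_2)\bigr)^k\hat h^0$. Both observe that $a/c_1=b/c_2$ reduces $\coth\cdot\tanh$ to $1$ (the paper phrases this as $\hat f(s)\equiv 0$), and both choose $\theta=1/(1+c_1/c_2)$ to make $h^1\equiv 0$ for every $T$. Your derivation of the prefactor $c_1/c_2$ from $\sigma_1/\sigma_2=c_2/c_1$, with $\cosh(b\sigma/c_2)$ in the denominator, is slightly cleaner than the paper's displayed formula, which has $\cosh(a\sigma/c_1)$ there; the two agree only under the hypothesis.
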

 \begin{proof}
The proof proceeds by taking the Laplace transform of the iterative DNWR error equations. For iteration index \(k = 1, 2, 3, \dots\), the transformed error functions \(\hat e_1^k(x,s)\) and \(\hat e_2^k(x,s)\) are obtained as solutions to the resulting boundary value problems, 
\begin{align*}
\hat e_1^k(x,s) &= \hat h^{k-1}(s)\, \frac{\sinh\left(\frac{\sqrt{s^2-\lambda e^{-\tau s}}(x+a)}{c_1}\right)}{\sinh\left(\frac{a\sqrt{s^2-\lambda e^{-\tau s}}}{c_1}\right)}, \\
\hat e_2^k(x,s) &= -\hat h^{k-1}(s)\, \frac{{c_1}}{{c_2}}\, \coth\left(\frac{a\sqrt{s^2-\lambda e^{-\tau s}}}{c_1}\right) \frac{\sinh\left(\frac{\sqrt{s^2-\lambda e^{-\tau s}}(b-x)}{c_2}\right)}{\cosh\left(\frac{a\sqrt{s^2-\lambda e^{-\tau s}}}{c_1}\right)}.
\end{align*}
The relaxation update in the Laplace domain is
\begin{equation*}
 \hat h^k(s) = \theta\times\hat e_2^k(0,s) + (1-\theta)\times\hat h^{k-1}(s).   
\end{equation*}
Substituting the expression for \(\hat e_2^k(0,s)\) and simplifying yields the recursive relation
\begin{equation*}
\hat h^k(s) = \Bigl(1-\theta - \theta{\frac{c_1}{c_2}}\; \coth\!\Bigl(\tfrac{a\sqrt{s^2-\lambda e^{-\tau s}}}{c_1}\Bigr)
\tanh\!\Bigl(\tfrac{b\sqrt{s^2-\lambda e^{-\tau s}}}{c_2}\Bigr)\Bigr)^{\!k}\, \hat h^0(s).    
\end{equation*}
Introducing the shorthand $A = a/{c_1}$ and $B = b/{c_2}$ and defining
\begin{equation*}
\hat f(s) = \coth\!\bigl(A\sqrt{s^2-\lambda e^{-\tau s}}\bigr)\,\tanh\!\bigl(B\sqrt{s^2-\lambda e^{-\tau s}}\bigr) - 1,    
\end{equation*}
we may rewrite the factor as
\begin{equation*}
\hat h^k(s) = \Bigl(\bigl[1-\bigl(1+{\tfrac{c_1}{c_2}}\bigr)\theta\bigr] - {\tfrac{c_1}{c_2}}\,\theta\,\hat f(s)\Bigr)^{\!k}\,\hat h^0(s).    
\end{equation*}
When the scaled subdomain lengths are equal, i.e., \(A = B\) (equivalently \(|\Omega_1|/{c_1}=|\Omega_2|/{c_2}\)), the function \(\hat f(s)\) vanishes. In that case the inverse Laplace transform gives
\begin{equation*}
h^k(t) = \Bigl[1-\bigl(1+{\tfrac{c_1}{c_2}}\bigr)\theta\Bigr]^{\,k}\, h^0(t).    
\end{equation*}
After choosing the relaxation parameter as \(\theta = 1/\bigl(1+{c_1/c_2}\bigr)\), the factor becomes zero, so that \(h^k(t) = 0\) for \(k \ge 1\). Hence the DNWR iteration converges exactly within two iterations under this parameter choice and geometric condition.
 \end{proof}

 \section{Multi-Subdomain Convergence Analysis of NNWR}
We examine the convergence of NNWR for the one-dimensional wave equation with time delay on the domain $\Omega = (0, D)$. 
By imposing continuity criteria at the interfaces, the spatial domain $\Omega$ is divided into $N$ disjoint subdomains $\Omega_i = (x_{i-1}, x_i)$.  The length of the $i$-th subdomain is represented by $h_i = x_i - x_{i-1}$, while the minimal subdomain size is represented by $h_{\min} = \min_{1 \leq i \leq N} h_i$.
The initial guess for the solution at the interfaces $x_i$ is represented by $\{ g^0_i(t) \}_{i=1}^{N-1}$. We assume homogeneous initial and boundary conditions due to the linearity of the problem.  
For all $k$, we set $g_0^k(t) = g_N^k(t) = 0$, allowing us to reformulate the problem accordingly. First, we solve dirichlet problems on $\Omega_i$ in parallel.
\begin{equation}\label{eq_13}
\left\{\begin{array}{rl}
\partial_{tt} e_i^k-c ^2\partial_{xx} e_i^k-\lambda e_i^k(x, t-\tau )&=0, \ (x, t)\in \Omega_i\times (0, T),  \\ 
  e_i^k(x, t)&=0, \  (x, t)\in \Omega_i\times [-\tau, 0], \\ 
  \partial_te_i^k(x, t)&=0, \ \ \ (x, t)\in \Omega_i\times[-\tau,0], \\ 
e_i^k(x_{i-1} , t)&=g^{k-1}_{i-1}(t) \  t\in(0, T),\\
 e_i^k(x_i,t)&= g_i^{k-1}(t), \  t\in(0, T),
\end{array}\right.
\end{equation}\\
followed by the correction step for all $k$.
\begin{equation}\label{eq_14}
\left\{\begin{array}{rl}
\partial_{tt} \phi_i^k-c ^2\partial_{xx} \phi_i^k-\lambda \phi_i^k(x, t-\tau )&=0, \ (x, t)\in \Omega_i\times (0, T),  \\ 
  \phi_i^k(x, t)&=0, \  (x, t)\in \Omega_i\times [-\tau, 0], \\
   \partial_t\phi_i^k(x, t)&=0,\ \ \  (x, t)\in \Omega_i\times[-\tau,0], \\
-\partial_x \phi_i^k(x_{i-1},t)&= (\partial_x e^k_{i-1}-\partial_x e_i^k)(x_{i-1},t), \  t\in(0, T), \\
\partial_x \phi_i^k(x_{i},t)&= (\partial_x e^k_{i}-\partial_x e_{i+1}^k)(x_{i},t)\  t\in(0, T).
\end{array}\right.
\end{equation}\\
The interface update condition, with $\theta$ denoting the relaxation parameter, is defined as:
\begin{equation}
g_i^{k}(t) = g_i^{k-1}( t)-(\theta\times(\phi_i^k(x_i,t)+\phi_{i+1}^k(x_i,t))).\\
\end{equation}
The goal is to ensure that the solution in each subdomain matches at the interfaces.

\begin{theorem}{(NNWR convergence for multiple subdomains)}\label{theorem4}
For the wave PDE with time delay, the NNWR algorithm \eqref{eq_13}-\eqref{eq_14} achieves convergence within at most $k+1$ iterations for $\theta=1/4$, assuming the time window $T$ adheres to the constraint $T/k \leq 2h_{min}/c$, where $c$ represents the wave propagation speed.
\end{theorem}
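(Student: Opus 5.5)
We follow the Laplace-transform strategy used for Theorem~\ref{thm_2}. Set $X=\sqrt{s^2-\lambda e^{-\tau s}}/c$ and write $\sigma_i=\sinh(h_iX)$, $\gamma_i=\cosh(h_iX)$.

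\textbf{Step 1: solve the subproblems in Laplace space.} Transforming \eqref{eq_13} in time turns each Dirichlet subproblem into an ODE with explicit solution
\[
\hat e_i^k(x,s)=\frac{\hat g_{i-1}^{k-1}\sinh((x_i-x)X)+\hat g_i^{k-1}\sinh((x-x_{i-1})X)}{\sigma_i}.
\]
From this we compute the interface flux jumps $(\partial_x\hat e_i^k-\partial_x\hat e_{i+1}^k)(x_i,s)$ in terms of $\hat g_{i-1}^{k-1},\hat g_i^{k-1},\hat g_{i+1}^{k-1}$.

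Next we solve the Neumann correction \eqref{eq_14} with $\cosh$-type solutions. Its traces at $x_{i-1}$ and $x_i$ involve $\coth(h_iX)$ and $\operatorname{csch}(h_iX)$.

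\textbf{Step 2: form the interface recurrence.} Inserting both solutions into the update rule gives a three-point-stencil-type recurrence, exactly as in the known wave-equation NNWR analysis. Each $\hat g_i^k$ is a combination of $\hat g_{i-2}^{k-1},\dots,\hat g_{i+2}^{k-1}$ with coefficients built from $\coth(h_jX)$ and $\operatorname{csch}(h_jX)$.

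For $\theta=1/4$, the diagonal coefficient is
\[
1-\theta\Bigl(2+\coth(h_iX)\coth(h_{i+1}X)+\coth(h_{i+1}X)\coth(h_iX)+\dots\Bigr),
\]
with cross terms such as $\coth^2+\operatorname{csch}^2$. Writing $\coth(z)=1+2\sum_{m\ge1}e^{-2mz}$ and $\operatorname{csch}(z)=2\sum_{m\ge0}e^{-(2m+1)z}$, the constant term of the diagonal coefficient is $1-4\theta=0$. This is exactly why $\theta=1/4$ is chosen.

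Consequently, every coefficient of the recurrence is a sum of exponentials $e^{-\beta X}$ with $\beta\ge 2h_{\min}$:
\begin{itemize}
\item diagonal terms carry factors $e^{-2h_jX}$;
\item off-diagonal terms carry $\operatorname{csch}(h_iX)\operatorname{csch}(h_{i+1}X)$ or similar products, whose exponents are at least $h_i+h_{i+1}\ge 2h_{\min}$.
\end{itemize}

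\textbf{Step 3: iterate.} By induction on $k$, the vector $\hat g^k$ equals $\hat g^0$ acted on by a matrix whose entries are linear combinations of $e^{-\beta X}$ with $\beta\ge 2kh_{\min}$.

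\textbf{Step 4: invert.} Lemma~\ref{lem_efros}, or the Bessel-series form in the Remark following Theorem~\ref{thm_2}, gives $\mathcal L^{-1}\{e^{-\beta X}\hat g^0\}$. It is a shifted copy $H(t-\beta/c)\,g^0(t-\beta/c)$ plus terms supported in $t\ge \beta/c+n\tau$ with $n\ge1$. By the convolution theorem, every contribution to $g_i^k$ therefore vanishes for $t<2kh_{\min}/c$. Hence $T\le 2kh_{\min}/c$ gives $g_i^k\equiv0$ on $[0,T]$ for all $i$, and one more Dirichlet solve produces the exact solution, i.e.\ convergence in $k+1$ iterations.

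\textbf{Main obstacle.} The delicate part is Step~2: verifying that with $\theta=1/4$ every $O(1)$ constant cancels in the full multi-subdomain iteration matrix, including entries coupling $g_{i\pm1}$ and $g_{i\pm2}$. The cancellation must hold for unequal $h_i$, and the products of series must be handled as formal series in $e^{-X}$. The plan is to organise the computation by collecting the $e^{0}$ terms first, which reduces to the scalar identity $1-4\theta=0$. The remaining terms only need a lower bound on their exponents, not their exact coefficients.

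A secondary point is justifying term-by-term inversion. For this we rely on Lemma~\ref{lem_efros} with $\operatorname{Re}X>0$ for large real $s$, so that all series converge.
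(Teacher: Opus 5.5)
Your proposal is correct and takes essentially the same route as the paper: Laplace-transform the Dirichlet and Neumann subproblems, form the five-point interface recurrence with $\theta=1/4$ (the paper's coefficients $\hat r_{i,j}$), expand the hyperbolic coefficients into exponentials $e^{-\beta X}$, and invert via Lemma~\ref{lem_efros} and the second translation theorem; your one-step bound $\beta\ge 2h_{\min}$ followed by induction is a tidier packaging of the bookkeeping in \eqref{eq:26}--\eqref{eq:29}. The obstacle you flag is settled by the closed forms: the $\hat g_{i\pm1}$ coefficients have the form $\sinh((h_i-h_{i+2})X)/(\sinh(h_iX)\sinh(h_{i+1}X)\sinh(h_{i+2}X))$, whose constant cancels independently of $\theta$ (only the diagonal needs $1-4\theta=0$), and the end subdomains carry a homogeneous Dirichlet condition at the physical boundary, which you omitted and which gives $\tanh(h_1X)$ factors and $\hat r_{1,1}=2\cosh((2h_1-h_2)X)/(\sinh(2h_1X)\sinh(h_2X))$, so all exponents are indeed at least $2h_{\min}$.
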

\begin{proof}
\textbf{Proof Outline:} The proof proceeds in the following main steps:
\begin{enumerate}
    \item[i.] 
    We begin by applying the Laplace transform to the homogeneous Dirichlet and Neumann subproblems to obtain their local analytical solutions in the Laplace space.
    \item[ii.] 
    We incorporate these local solutions into the update condition and apply mathematical induction with $\theta = 1/4$. This allows us to express the interface error at the $k$-th iteration solely in terms of the initial error at $k=0$, with coefficients $\hat r_{i,j}$ that involve hyperbolic functions.
    \item[iii.] 
    We expand the coefficients into infinite series of exponential terms.
    \item[iv.] 
    Finally, we apply the inverse Laplace transform. Using the Efros Theorem 
    we demonstrate that for a sufficiently small time window $T \leq 2k h_{\min}/c$, the error terms become zero, thereby achieving exact convergence.
\end{enumerate}
    On applying the Laplace transform to the homogeneous Dirichlet problems we get,
    \begin{equation}
        s^2 \hat e_i^k-c^2\hat  e_{i,xx}^k-\lambda e^{-s\tau}\hat e_i^k =0,\ \ \hat e_i^k(x_{i-1} , s)=\hat g^{k-1}_{i-1}(s),\ \ \hat e_i^k(x_i,s)= \hat g_i^{k-1}(s).
    \end{equation}   
Let $\beta_i=\cosh(h_i\sqrt{s^2-\lambda e^{-\tau s}}/c)$ and $\alpha_i=\sinh(h_i\sqrt{s^2-\lambda e^{-\tau s}}/c)$ for $i=2,3,\ldots N-1.$ Then the solution within the subdomain is given by:
\begin{equation}
    \hat e_i^k=\frac{1}{\alpha_i }\left ( \hat g_i^{k-1}(s)\sinh\left ( \frac{\sqrt{s^2-\lambda e^{-\tau s}}}{c}(x-x_{i-1}) \right )+ \hat g_{i-1}^{k-1}(s)\sinh\left ( \frac{\sqrt{s^2-\lambda e^{-\tau s}}}{c}(x_{i}-x) \right ) \right).
\end{equation}
The solution to Neumann subproblems on applying Laplace transform are :
\begin{equation}
    \hat \phi_i^k=\left ( A_i(s)\cosh\left ( \frac{\sqrt{s^2-\lambda e^{-\tau s}}}{c}(x-x_{i-1}) \right )+ B_{i}(s)\cosh\left ( \frac{\sqrt{s^2-\lambda e^{-\tau s}}}{c} (x_{i}-x)\right ) \right),
\end{equation}
where, \begin{equation*}
    A_i=\frac{1}{\alpha_i }\left (\hat g_i^{k-1}\left ( \frac{\beta_i}{\alpha_i} +\frac{\beta_{i+1}}{\alpha_{i+1}}\right )-\frac{\hat g_{i-1}^{k-1}}{\alpha_i}-\frac{\hat g_{i+1}^{k-1}}{\alpha_{i+1}} \right ),
\end{equation*} 
\begin{equation*}
B_i=\frac{1}{\alpha_i }\left (\hat g_{i-1}^{k-1}\left ( \frac{\beta_i}{\alpha_i} +\frac{\beta_{i-1}}{\alpha_{i-1}}\right )-\frac{\hat g_{i-2}^{k-1}}{\alpha_{i-1}}-\frac{\hat g_{i}^{k}}{\alpha_{i}} \right ).    
\end{equation*} 
By induction from update step, $i=2,3,\ldots N-2$ we get,
\begin{align*}
g^{k}_i (s) = \hat{g}^{k-1}_i (s) - \theta 
\left( \hat{\phi}^{k}_i (x_i , s) + \hat{\phi}^{k}_{i+1} (x_i , s) \right) 
\end{align*}
\begin{align*}
\Rightarrow g^{k}_i = \hat{g}^{k-1}_i (s) - \theta (A_i \beta_i + B_i + A_{i+1} + B_{i+1} \beta_{i+1}).
\end{align*}
Using the identity \( \beta_i^2 - 1 = \alpha_i^2 \) and simplifying, we obtain
\begin{align}
\hat{g}^{k}_i = \hat{g}^{k-1}_i - \theta \Bigg( &\hat{g}^{k-1}_i \left(2 + \frac{2\beta_i \beta_{i+1}}{\alpha_i \alpha_{i+1}}\right)+ \frac{\hat{g}^{k-1}_{i+1}}{\alpha_{i+1}} \left( \frac{\beta_{i+2}}{\alpha_{i+2}} - \frac{\beta_i}{\alpha_i} \right) \notag \\
&+ \frac{\hat{g}^{k-1}_{i-1}}{\alpha_i} \left( \frac{\beta_{i-1}}{\alpha_{i-1}} - \frac{\beta_{i+1}}{\alpha_{i+1}} \right) -\frac{\hat{g}^{k-1}_{i+2}}{\alpha_{i+1} \alpha_{i+2}}
- \frac{\hat{g}^{k-1}_{i-2}}{\alpha_i \alpha_{i-1}} \Bigg).
\label{eq:update}
\end{align}
After applying the Laplace transform, the solution for the first and last subdomains, which have homogeneous Dirichlet physical boundaries, is given by:
\begin{align*}
\hat{\phi_1}(x,s) &= \frac{1}{\beta_1} \left(\hat g_1 \left( \frac{\beta_1}{\alpha_1} + \frac{\beta_2}{\alpha_2} \right )- \frac{\hat{g_2}}{\alpha_2} \right) \sinh \left( \frac{(x - x_0)\sqrt {s^2-\lambda e^{-\tau s}}}{c} \right), \\
\hat{\phi}_N (x,s) &= \frac{1}{\beta_N} \left( \hat g_{N-1}\left (\frac{\beta_{N-1}}{ \alpha_{N-1}} + \frac{\beta_N}{\alpha_N}\right ) - \frac{\hat{g}_{N-2}}{\alpha_{N-1}} \right) \sinh \left( \frac{(x_N - x)\sqrt{s^2-\lambda e^{-\tau s}}}{c} \right).
\end{align*}
The update conditions on the first and the last interfaces are
\begin{align}
\hat{g}^{k}_1 = \hat{g}^{k-1}_1 - \theta \Bigg(&\hat{g}^{k-1}_1 \left(2 + \frac{\beta_1 \beta_2}{\alpha_1 \alpha_2} + \frac{\alpha_1 \beta_2}{\beta_1 \alpha_2} \right) +\frac{\hat{g}^{k-1}_2}{\alpha_2} \left( \frac{\beta_3}{\alpha_3} - \frac{\alpha_1}{\beta_1} \right) -\frac{\hat{g}^{k-1}_3}{\alpha_2 \alpha_3} \Bigg),
\end{align}
\begin{align}
\hat{g}^{k}_{N-1} = \hat{g}^{k-1}_{N-1} - \theta \Bigg(&\hat{g}^{k-1}_{N-1} \left(2 + \frac{\beta_{N-1} \beta_N}{\alpha_{N-1} \alpha_N} + \frac{\alpha_N \beta_{N-1}}{\beta_N \alpha_{N-1}} \right)+ \frac{\hat{g}^{k-1}_{N-2}}{\alpha_{N-1}} \left( \frac{\beta_{N-2}}{\alpha_{N-2}} - \frac{\alpha_N}{\beta_N} \right) \notag \\
&- \frac{\hat{g}^{k-1}_{N-3}}{\alpha_{N-1} \alpha_{N-2}} \Bigg).
\label{eq:22}
\end{align}
From the update step (\ref{eq:update}) for $\theta=1/4$, we get
\begin{align}
\hat{g}^{k}_i (s) = -\frac{1}{4} \Big( & \hat{r}_{i,i} \hat{g}^{k-1}_i (s) + \hat{r}_{i,i+1} \hat{g}^{k-1}_{i+1} (s) + \hat{r}_{i,i-1} \hat{g}^{k-1}_{i-1} (s) 
- \hat{r}_{i,i+2} \hat{g}^{k-1}_{i+2} (s) - \hat{r}_{i,i-2} \hat{g}^{k-1}_{i-2} (s) \Big),
\label{eq:23}
\end{align}
where we denote:
\begin{align*}
 \hat {r}_{i,i} = \frac{2}{\alpha_i \alpha_{i+1}} (\beta_i \beta_{i+1} - \alpha_i \alpha_{i+1}),
 \hat{r}_{i,i+1} = \frac{(\alpha_i \beta_{i+2} - \beta_i \alpha_{i+2})}{\alpha_i \alpha_{i+1} \alpha_{i+2}},
 \hat{r}_{i,i-1} &= \frac{(\alpha_{i+1} \beta_{i-1} - \beta_{i+1} \alpha_{i-1})}{ \alpha_{i-1} \alpha_i\alpha_{i+1}},
 \end{align*}
 
$$ \hat{r}_{i,i+2} = \frac{1}{\alpha_{i+1} \alpha_{i+2}},
 \hat{r}_{i,i-2} = \frac{1}{\alpha_i \alpha_{i-1}}.$$\\
Similarly we can write $\hat g^k_1(s)$ and $\hat g^k_{N-1}(s)$,
\begin{align}
\hat{g}^{k}_1(s) &= -\frac{1}{4} \Big( \hat{r}_{1,1} \hat{g}^{k-1}_1(s) + \hat{r}_{1,2} \hat{g}^{k-1}_2(s) - \hat{r}_{1,3} \hat{g}^{k-1}_3(s) \Big), 
\label{eq:24}
\end{align}
\begin{align*}
\text{where}\ \ \hat{r}_{1,1} &= \left( \frac{\alpha_1\beta_2}{\beta_1 \alpha_2} + \frac{\beta_{1}\beta_{2}}{\alpha_1 \alpha_2} - 2 \right),\
\hat{r}_{1,2} = \frac{1}{\alpha_2} \left( \frac{\beta_3}{\alpha_3} - \frac{\alpha_1}{\beta_1} \right),\
\hat{r}_{1,3} = \frac{1}{\alpha_2 \alpha_3}.
\end{align*}
\begin{align}
\hat{g}^{k}_{N-1}(s) &= -\frac{1}{4} \Big( \hat{r}_{N-1,N-1} \hat{g}^{k-1}_{N-1}(s) + \hat{r}_{N-1,N-2} \hat{g}^{k-1}_{N-2}(s) - \hat{r}_{N-1,N-3} \hat{g}^{k-1}_{N-3}(s) \Big),
\label{eq:25}
\end{align}
\begin{align*}
\text{where}\ \ \hat{r}_{N-1,N-1} &= \left( \frac{\alpha_{N-1}\beta_{N-2}}{\beta_{N-1} \alpha_{N-2}} + \frac{\beta_{N-1}\beta_{N-2}}{\alpha_{N-1} \alpha_{N-2}} - 2 \right),\\
\hat{r}_{N-1,N-2} &= \frac{1}{\alpha_{N-2}} \left( \frac{\beta_{N-3}}{\alpha_{N-3}} - \frac{\alpha_{N-1}}{\beta_{N-1}} \right),\
\hat{r}_{N-1,N-3} = \frac{1}{\alpha_{N-2} \alpha_{N-3}}.
\end{align*}
Note that $\hat{r}_{i,i+1} = -\hat{r}_{i+1,i}$, $\hat{r}_{i,i+2} = \hat{r}_{i+2,i} $. Thus, by applying induction on the equations (\ref{eq:23})-(\ref{eq:25}), we obtain:
\begin{equation}
\hat{g}_i^{k} (s) =
\sum_{j=-2n}^{2n}
\left( -\frac{1}{4} \right)^n
q_{i+j}^{n}
\left(
\hat{r}_{i+j,i+j-2}, \hat{r}_{i+j,i+j-1}, \dots, \hat{r}_{i,i}, \dots, \hat{r}_{i+j,i+j+1}, \hat{r}_{i+j,i+j+2}
\right)
\hat{g}_{i+j}^{k-n} (s),
\label{eq:26}
\end{equation}
and

\begin{equation}
\hat{g}_1^{k} (s) =
\sum_{j=0}^{2n}
\left( -\frac{1}{4} \right)^n
q_{1+j}^{n}
\left(
\hat{r}_{1,1}, \dots, \hat{r}_{1+j,2+j}, \hat{r}_{1+j,3+j}
\right)
\hat{g}_{1+j}^{k-n} (s).
\label{eq:27}
\end{equation}
Here, the coefficients $q^n_{i+j}$ and $q^n_{1+j}$ denote homogeneous polynomials of degree $n$.
Similarly we can write the expression for \( \hat{g}_{N-1}^{k}(s) \). Now denoting by \( X=\frac{\sqrt{s^2-\lambda e^{-\tau s}}}{c}\) and using the binomial series expansion for the hyperbolic functions, we derive the following:
\begin{align*}
\hat{r}_{i,i} &= 2 \frac{\cosh \left( (h_i - h_{i+1}) X \right)}{\sinh(h_i X) \sinh(h_{i+1} X)} = 4 ( e^{-2h_i X} + e^{-2h_{i+1} X}) \\
&\quad \times \left[ 1+ \sum_{m=1}^{\infty} e^{-2h_i m X} + \sum_{n=1}^{\infty} e^{-2h_{i+1} n X}+ \sum_{m=1}^{\infty} \sum_{n=1}^{\infty} e^{-2(m h_i + n h_{i+1}) X} \right],
\end{align*}
\begin{align*}
    \hat r_{i,i+1} &= \frac{\sinh \left((h_i - h_{i+2})X \right)}{\sinh\left(h_i X \right) \sinh\left(h_{i+1} X \right) \sinh\left(h_{i+2} X \right)} \\
    &= 4 \Bigg[1 + \sum_{l=1}^{\infty} e^{-2l h_i X} + \sum_{m=1}^{\infty} e^{-2m h_{i+1} X} + \sum_{n=1}^{\infty} e^{-2n h_{i+2} X} \\
    & \quad + \sum_{m=1}^{\infty} \sum_{n=1}^{\infty} \left(e^{-2(m h_i + n h_{i+1})X} + e^{-2(m h_{i+1} + n h_{i+2})X} + e^{-2(m h_{i+2} + n h_{i})X} \right) \\
    & \quad + \sum_{l=1}^{\infty} \sum_{m=1}^{\infty} \sum_{n=1}^{\infty} e^{-2(l h_i + m h_{i+1} + n h_{i+2})X} \Bigg] \left(e^{-(h_{i+1} + 2 h_{i+2})X} - e^{-(h_{i+1} + 2 h_i)X} \right),
\end{align*}

\begin{align*}
\hat{r}_{i,i+2} &= 
\frac{1}{\sinh(h_{i+1} X) \sinh(h_{i+2} X)} = 4e^{-(h_{i+1} + h_{i+2}) X}\notag\\
&\Bigg[
1 + \sum_{m=1}^{\infty} e^{-2m h_{i+1} X}
+\sum_{n=1}^{\infty} e^{-2n h_{i+2} X} +\sum_{m=1}^{\infty} \sum_{n=1}^{\infty} e^{-2(m h_{i+1} + n h_{i+2}) X}
\Bigg],
\end{align*}

\begin{align*}
\hat{r}_{1,1} &= 
\frac{2 \cosh((2h_1 - h_2) X)}{\sinh(2h_1 X) \sinh(h_2 X)}= 4 \left( e^{-4h_1 X} + e^{-2h_2 X} \right) \notag \\
&\Bigg[
1 + \sum_{m=1}^{\infty} e^{-4m h_1 X}
+ \sum_{n=1}^{\infty} e^{-2n h_2 X}  + \sum_{m=1}^{\infty} \sum_{n=1}^{\infty} e^{-2(2m h_1 + n h_2) X}
\Bigg],
\end{align*}

\begin{align*}
\hat{r}_{1,2} &= 
\frac{\cosh((h_1 - h_3) X)}{\cosh(h_1 X) \sinh(h_2 X) \sinh(h_3 X)}=\notag \\ & 4 
\Bigg[
1+  \sum_{l=1}^{\infty} (-1)^l e^{-2l h_1 X}
+ \sum_{m=1}^{\infty} e^{-2m h_2 X} 
+ \sum_{n=1}^{\infty} e^{-2n h_3 X} \notag \\
&\quad + \sum_{m=1}^{\infty} \sum_{n=1}^{\infty} 
\Big( (-1)^m e^{-2(m h_1 + n h_2) X} 
+ (-1)^m e^{-2(m h_1 + n h_3) X} 
+ e^{-2(m h_2 + n h_3) X} \Big) \notag \\
&\quad + \sum_{l=1}^{\infty} \sum_{m=1}^{\infty} \sum_{n=1}^{\infty} 
(-1)^l e^{-2(l h_1 + m h_2 + n h_3) X} 
\Bigg] \times \Big(e^{-(2h_1 + h_2) X} + e^{-(h_2 + 2h_3) X} \Big).
\end{align*}
Similarly, we can write for the terms $\hat{r}_{N-1,N-3}$, $\hat{r}_{N-1,N-2}$, $\hat{r}_{N-1,N-1}$, $\hat{r}_{i,i-2}$,  $\hat{r}_{i,i-1}$ and $\hat{r}_{1,3}$ .
Now using the above expressions, equations (\ref{eq:26})-(\ref{eq:27}) become
\begin{equation}
\hat{g}_i^k (s) = (-1)^k \left[ \left( e^{-2kh_iX} + e^{-2kh_{i+1}X} \right) \hat{g}_i^0 (s) + \sum_{j=-2k}^{2k} t_{i+j}^{k}(s) \hat{g}_{i+j}^0 (s) \right],
\label{eq:28}
\end{equation}
and
\begin{equation}
\hat{g}_1^k (s) = (-1)^k \left[ \left( e^{-4h_1kX} + e^{-2h_2kX} \right) \hat{g}_1^0 (s) + \sum_{j=0}^{2k} p_{1+j}^{k}(s) \hat{g}_{1+j}^0 (s) \right].
\label{eq:29}
\end{equation}
Here $t_{i+j}^{k}(s)$ and $p_{1+j}^{k}(s)$ are linear combinations of exponential terms $e^{-\alpha X}$, where $\alpha = 2k h_l$ for indices $l \in \{1, 2, \dots, N\}$. Similarly, we can write for $\hat{g}_{N-1}^k (s)$. Utilizing \eqref{eq_12} to compute the inverse transform of \eqref{eq:28}--\eqref{eq:29}, we derive results similar to the wave equation without delay \citep{wavemulti} i.e. when $T \leq 2k h_{\min}/c$, we get $g_i^k (t) = 0$ for all $i$, because from (\ref{inverselaplace}) we have 

$\begin{aligned}
    \mathcal{L}^{-1} \left\{ e^{ -\frac{2k h_{\min}}{c} \sqrt{s^2 - \lambda e^{-\tau s}} } \right\}
&=
D\left(\frac{2k h_{\min}}{c}, 0, t - \frac{2k h_{\min}}{c}\right)\\
&+ 
\sum_{n \in \mathbb{N}} D\left(\frac{2k h_{\min}}{c}, \lambda, t - \frac{2k h_{\min}}{c} - n\tau \right).
\end{aligned}$\\
Therefore, we get 
\begin{align*}
g_i^k(t)&=(-1)^k a_iH(t-2kh_{i}/c)g_i^0(t-2kh_i/c)+a_{i+1}H(t-2kh_{i+1}/c)g_i^0(t-2kh_{i+1}/c)\\
&+(-1)^kb_iH(t-2kh_{i}/c-\tau)g_i^0(t-2kh_i/c-\tau)\\
&+b_{i+1}H(t-2kh_{i+1}/c-\tau)g_i^0(t-2kh_{i+1}/c-\tau)+\text{other terms},
\end{align*}
and
\begin{align*}
g_1^k(t)&=(-1)^k a_1H(t-2kh_{1}/c)g_1^0(t-2kh_1/c)+a_2H(t-2kh_{2}/c)g_1^0(t-2kh_{2}/c)\\
&+(-1)^kb_1H(t-2kh_{1}/c-\tau)g_1^0(t-2kh_1/c-\tau)\\
&+b_2H(t-2kh_{2}/c-\tau)g_1^0(t-2kh_{2}/c-\tau)+ \text {other terms}.
\end{align*}
Here $a_i, b_i$ are real constants. Similarly, we can write expression for $g^k_{N-1}(t)$ and hence the algorithm converges when $T\leq 2kh_{min}/c$.
\end{proof}
\section{NNWR Convergence Analysis in 2-D}
This section details the formulation and convergence analysis of the NNWR algorithm for the two-dimensional wave equation with a time delay. To assess the performance of the NNWR method on Equation~(\ref{eq_1}) in two spatial dimensions, we analyze the corresponding error equation under homogeneous boundary conditions in both the \(x\)- direction as well as \(y\)-direction, along with a zero-valued history function.

\begin{figure}
    \centering
    \includegraphics[width=0.8 \linewidth]{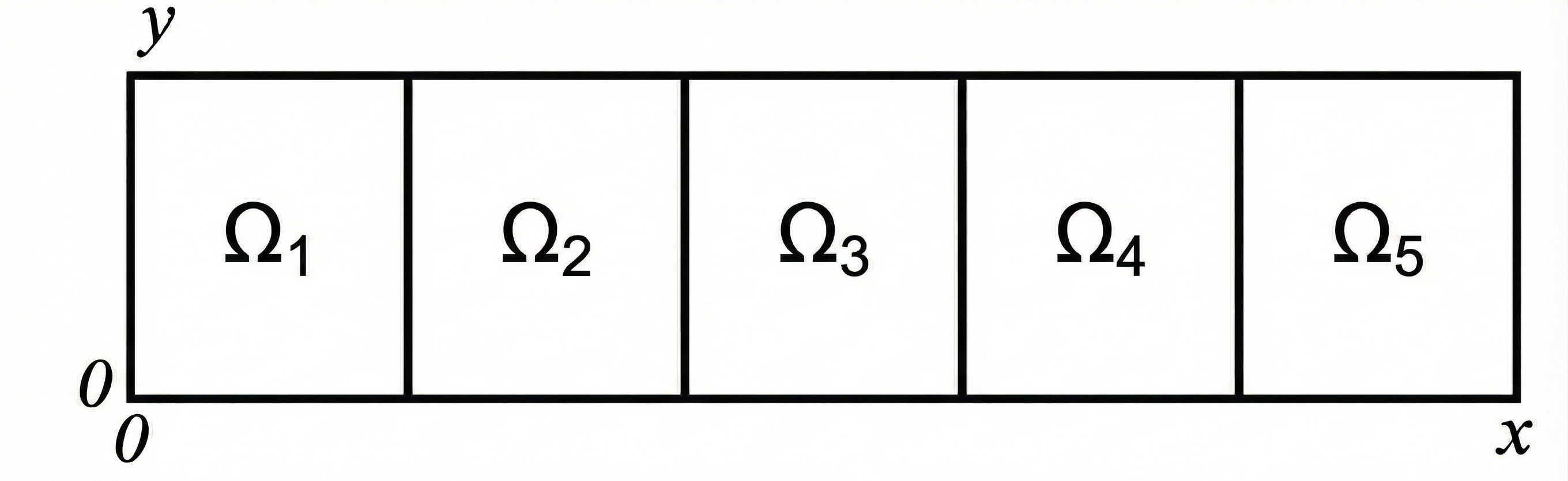}
   \caption{Illustration of 2D spatial domain decomposition into strips}
     \label{2Dstrips}
    \end{figure}
In the formulation of the NNWR algorithm, the spatial domain $\Omega$ is partitioned into a collection of vertical strips (Fig. \ref{2Dstrips}), given by
    
\[
\Omega_i = (x_{i-1}, x_i) \times (0, \pi), \quad \text{for } i = 1, \ldots, N,
\]
where \( x_0 = l < x_1 < \cdots < x_N = L \). The width of each subdomain is represented by \( h_i := x_i - x_{i-1} \), and we define the minimal subdomain width as
$
h_{\min} := \min_{1 \leq i \leq N} h_i.$

Starting with initial interface approximations $\{\chi^0_i(y, t)\}_{i=1}^{N-1}$ at $\{x = x_i\}$, the NNWR algorithm is iteratively applied in the form of alternating Dirichlet and Neumann steps. Specifically, for each iteration index \( k = 1, 2, \ldots \) and for each subdomain index \( i = 1, \ldots, N \), the algorithm performs the following:
\begin{itemize}
    \item a Dirichlet solve using the interface values from iteration \( k-1 \),
    \item after which the interface values are updated via a Neumann correction.
\end{itemize}
This iterative procedure corresponds to a two-dimensional extension of the NNWR system as expressed in the equations (\ref{eq_13}-\ref{eq_14}).
\begin{equation}\label{nnwr_2d}
\left\{\begin{array}{rl}
\partial_{tt} e_i^k-c ^2(\partial_{xx} e_i^k+\partial_{yy}e_i^k)-\lambda e_i^k(x,y, t-\tau )&=0, \ (x,y, t)\in \Omega_i\times (0, T),  \\ 
  e_i^k(x, y,t)&=0, \  (x,y, t)\in \Omega_i\times [-\tau, 0], \\ 
  \partial_te_i^k(x,y, t)&=0, \ \ \ (x,y, t)\in \Omega_i\times[-\tau,0], \\ 
e_i^k(x_{i-1} ,y, t)&=g^{k-1}_{i-1}(y,t) \  t\in(0, T),\\
 e_i^k(x_i,y,t)&= g_i^{k-1}(y,t), \  t\in(0, T),\\
 e_i^k(x,0,t)&=e_i^k(x,\pi,t)=0,t\in(0,T),
\end{array}\right.
\end{equation}\\
Subsequently, the correction step is applied for all $k$.
\begin{equation}
\left\{\begin{array}{rl}\label{nnwr_2d_cor}
\partial_{tt} \phi_i^k-c ^2(\partial_{xx} \phi_i^k+\partial_{yy}\phi_i^k)-\lambda \phi_i^k(x, y,t-\tau )&=0, \ (x,y,t)\in \Omega_i\times (0, T),  \\ 
  \phi_i^k(x, y,t)&=0, \  (x,y,t)\in \Omega_i\times [-\tau, 0], \\
   \partial_t\phi_i^k(x,y, t)&=0,\ \ \  (x,y,t)\in \Omega_i\times[-\tau,0], \\
-\partial_x \phi_i^k(x_{i-1},y,t)&= (\partial_x e^k_{i-1}-\partial_x e_i^k)(x_{i-1},y,t), \  t\in(0, T), \\
\partial_x \phi_i^k(x_{i},y,t)&= (\partial_x e^k_{i}-\partial_x e_{i+1}^k)(x_{i},y,t)\  t\in(0, T),\\
\phi_i^k(x,0,t)&=\phi_i^k(x,\pi,t)=0,t\in(0,T),
\end{array}\right.
\end{equation}
The interface update condition is:
\begin{equation}
\chi_i^{k}(y,t) = \chi_i^{k-1}(y, t)-\theta(\phi_i^k(x_i,y,t)+\phi_{i+1}^k(x_i,y,t)).\\
\end{equation}
To simplify the problem, we apply a Fourier sine transform in the \( y \)-direction, thereby reducing the original two-dimensional problem into a set of one-dimensional problems. Thus, the solution is represented by a Fourier sine series as
\[
e^k_i(x, y, t) = \sum_{n=1}^{\infty} E^k_i(x, n, t) \sin(ny),
\]
with the coefficients defined as
\[
E^k_i(x, n, t) = \frac{2}{\pi} \int_0^{\pi} e^k_i(x, \eta, t) \sin(n \eta)\, d\eta.
\]
As a result, we are decomposing the 2-D problem into many independent 1-D problems for each mode $n$. 
\begin{equation}\label{2d_to_1d}
\frac{\partial^2 E^k_i}{\partial t^2}(x, n, t) - c^2 \frac{\partial^2 E^k_i}{\partial x^2}(x, n, t) + c^2 n^2 E^k_i(x, n, t)-\lambda E_i^k(x,n, t-\tau ) = 0, 
\end{equation}
with the appropriate boundary conditions for \( E^k_i(x, n, t) \). Before we present the final convergence result, the below auxiliary result is required.

\begin{lemma}
We have the following identity using Bessel functions of the first kind $J_\nu(z)$,
\begin{equation*}
  \begin{aligned}
\mathcal{L}^{-1}\left\{ e^{-\gamma \sqrt{s^2+\beta^2-\lambda e^{-\tau s}}}\right\} &= \left[ \delta(t - \gamma) - \frac{\beta \gamma J_1(\beta \sqrt{t^2 - \gamma^2})}{\sqrt{t^2 - \gamma^2}} \right] H(t - \gamma) \\
&+ \frac{\gamma \lambda}{2} J_0(\beta \sqrt{(t - \tau)^2 - \gamma^2}) H(t - \tau - \gamma) \\
&+ \frac{\gamma \lambda^2}{8} \left[ \frac{t - 2\tau - \gamma}{\beta(t - 2\tau + \gamma)} J_1(\beta \sqrt{(t - 2\tau)^2 - \gamma^2})\right. \\
&+ \left.\frac{\gamma}{2} J_0(\beta \sqrt{(t - 2\tau)^2 - \gamma^2}) \right] H(t - 2\tau - \gamma) + \dots
\end{aligned}  
\end{equation*}
\end{lemma}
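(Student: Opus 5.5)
The plan is to treat the delay term as a small perturbation of the undelayed Klein--Gordon-type symbol and expand in powers of $\mu:=\lambda e^{-\tau s}$, exactly in the spirit of Lemma~\ref{lem_efros} and the Bessel-series remark after Theorem~\ref{thm_2}. Put $R(s)=s^2+\beta^2$ and $F(\mu)=e^{-\gamma\sqrt{R-\mu}}$. For $\Re(s)$ large we have $|\mu|\le|\lambda|e^{-\tau\Re s}\ll|R|$. Hence $F$ is analytic in $\mu$ on a disc containing $\mu=\lambda e^{-\tau s}$, and we may write
\[
e^{-\gamma\sqrt{s^2+\beta^2-\lambda e^{-\tau s}}}=\sum_{n\ge 0}\frac{\lambda^n e^{-n\tau s}}{n!}\,F^{(n)}(0).
\]
The derivatives are computed by the chain rule. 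Since $\frac{d}{d\mu}e^{-\gamma\sqrt{R-\mu}}=\frac{\gamma}{2\sqrt{R-\mu}}e^{-\gamma\sqrt{R-\mu}}$, we get
\[
F'(0)=\frac{\gamma}{2}\frac{e^{-\gamma\sqrt R}}{\sqrt R},\qquad
F''(0)=\frac{\gamma}{4}\frac{e^{-\gamma\sqrt R}}{R^{3/2}}+\frac{\gamma^2}{4}\frac{e^{-\gamma\sqrt R}}{R}.
\]
In general, $F^{(n)}(0)$ is a finite combination of $R^{-j/2}e^{-\gamma\sqrt R}$.

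Next I would invert each term using standard tables (\citet{oberhettinger}). For the $n=0$ term I use
\[
\mathcal{L}^{-1}\{e^{-\gamma\sqrt{s^2+\beta^2}}\}=\delta(t-\gamma)-\frac{\beta\gamma J_1(\beta\sqrt{t^2-\gamma^2})}{\sqrt{t^2-\gamma^2}}H(t-\gamma),
\]
which gives the first bracket. For the $n=1$ term I use $\mathcal{L}^{-1}\{e^{-\gamma\sqrt{s^2+\beta^2}}/\sqrt{s^2+\beta^2}\}=J_0(\beta\sqrt{t^2-\gamma^2})H(t-\gamma)$. Applying the second translation theorem~\eqref{eq_12} to the factor $e^{-\tau s}$ shifts $t\mapsto t-\tau$ and yields the term $\frac{\gamma\lambda}{2}J_0(\beta\sqrt{(t-\tau)^2-\gamma^2})H(t-\tau-\gamma)$. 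For the $n=2$ term I need the inverses of $e^{-\gamma\sqrt R}/R$ and $e^{-\gamma\sqrt R}/R^{3/2}$. The second of these equals $\frac{t-\gamma}{\beta(t+\gamma)}$-type combinations of $J_1$, more precisely $\frac{1}{\beta}\sqrt{\frac{t-\gamma}{t+\gamma}}J_1(\beta\sqrt{t^2-\gamma^2})$. The first is expressed via an integral of $J_0$, or through the tabulated $\frac{\gamma}{2}J_0$ correction. I then shift both by $2\tau$ using~\eqref{eq_12} and multiply by $\lambda^2/2$ to assemble the third bracket. The higher terms contribute pieces supported in $t\ge n\tau+\gamma$, which are collected in the ``$\dots$''.

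Finally I would justify the termwise inversion. For $\Re s\ge\sigma_0$ sufficiently large, the series converges uniformly, and each term is $O(|\lambda|^n e^{-n\tau\sigma_0}|s|^{-n}e^{-\gamma\Re\sqrt R})$. Hence the sum of the inverse transforms converges locally uniformly on any finite $[0,T]$. This is actually a finite sum there, because the $n$-th term vanishes for $t<n\tau+\gamma$. This support property also recovers the Heaviside structure of Lemma~\ref{lem_efros}.

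I expect the main obstacle to be the $n=2$ term: matching $\tfrac{\gamma}{4}R^{-3/2}e^{-\gamma\sqrt R}+\tfrac{\gamma^2}{4}R^{-1}e^{-\gamma\sqrt R}$ precisely to the tabulated Bessel expressions with the stated coefficients. My first attempt would be to use the recursion $\frac{\partial}{\partial\gamma}$ applied to the known pair for $e^{-\gamma\sqrt R}/\sqrt R$ and the identity $\frac{d}{dz}(zJ_1)=zJ_0$ to generate the needed inverses. I would then check the result against the $\beta\to0$ limit, where everything reduces to the polynomial kernel of the earlier remark.
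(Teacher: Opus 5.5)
\textbf{Where the proposal breaks down: the $\lambda^2$ term.} Your expansion in $\mu=\lambda e^{-\tau s}$ follows essentially the paper's route. The paper expands $\sqrt{1-x}$ binomially, then the exponential, and inverts with tables and the second translation theorem; your single Taylor series in $\mu$ organizes this more cleanly. Your $F'(0)$, your $F''(0)$ and the first two displayed terms are correct. The step that fails is ``assemble the third bracket'', for two reasons.
\begin{itemize}
\item The inversions you plan to use are not valid. The function $\frac1\beta\sqrt{\frac{t-\gamma}{t+\gamma}}\,J_1(\beta\sqrt{t^2-\gamma^2})H(t-\gamma)$ is the original of $e^{-\gamma\sqrt R}/(\sqrt R\,(s+\sqrt R))$, not of $e^{-\gamma\sqrt R}R^{-3/2}$. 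Already at $\gamma=0$ one has $\mathcal{L}^{-1}\{R^{-3/2}\}=tJ_1(\beta t)/\beta$.
\item The original of $e^{-\gamma\sqrt R}/R$ really is $\int_\gamma^t J_0(\beta\sqrt{t^2-u^2})\,du$, your first option. It is not a multiple of $J_0$, so no table supplies a ``$\frac{\gamma}{2}J_0$ correction''. Differentiating in $\gamma$ does not help either: it multiplies by $-\sqrt R$, so it moves away from $R^{-1}$ and $R^{-3/2}$ rather than toward them.
\end{itemize}
More seriously, no correct bookkeeping can produce the stated bracket, because it is not the original of the $\lambda^2$ coefficient.

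\textbf{What the expansion actually gives.} The tool that works is $\partial_\mu=-\partial_{\beta^2}$, which holds because $R=s^2+\beta^2$. Hence the original of $F^{(n)}(0)$ is $(-1)^n\partial_{\beta^2}^n$ applied to the known kernel $\delta(t-\gamma)-\beta\gamma J_1(\beta\rho)\rho^{-1}H(t-\gamma)$, with $\rho=\sqrt{t^2-\gamma^2}$. Setting $z=\beta\rho$ and using $\frac{d}{dz}(zJ_1)=zJ_0$ and $\frac{d}{dz}(z^{-\nu}J_\nu)=-z^{-\nu}J_{\nu+1}$, the $n$-th term is
\[
\frac{\gamma\lambda^n}{2^n n!}\Bigl(\frac{\rho_n}{\beta}\Bigr)^{n-1}J_{n-1}(\beta\rho_n)\,H(t-n\tau-\gamma),\qquad \rho_n=\sqrt{(t-n\tau)^2-\gamma^2}.
\]
For $n=1$ this is the stated $\frac{\gamma\lambda}{2}J_0$ term. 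For $n=2$ it is $\frac{\gamma\lambda^2}{8}\frac{\rho_2}{\beta}J_1(\beta\rho_2)H(t-2\tau-\gamma)$, which differs from the stated bracket. Since all terms with $n\ge 3$ vanish for $t<3\tau+\gamma$, the statement is false on $(2\tau+\gamma,3\tau+\gamma)$. Three independent checks confirm this:
\begin{itemize}
\item The stated bracket jumps by $\gamma^2\lambda^2/16$ at $t=2\tau+\gamma$. The true $\lambda^2$ coefficient is asymptotic to $\frac{\gamma^2\lambda^2}{8}s^{-2}e^{-(2\tau+\gamma)s}$, so its original must vanish there.
\item Both pieces of the stated term carry units $(\text{time})^{-2}$ instead of $(\text{time})^{-1}$.
\item Your own $\beta\to0$ check catches it. The correct term tends to $\frac{\gamma\lambda^2}{16}[(t-2\tau)^2-\gamma^2]$, the kernel in the remark after Theorem~\ref{thm_2}; the stated one does not.
\end{itemize}

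\textbf{What can be proved, and the paper's own argument.} The paper's proof does not rescue the third term. It never derives the $\lambda^2$ bracket and stops at the structural form $\sum_N A_N(t)H(t-N\tau-\gamma)$. Its table formula for $e^{-\gamma\sqrt R}R^{-\nu/2}$ is correct only for $\nu=1$. What your method can honestly prove is either of the following:
\begin{itemize}
\item the first two terms plus a remainder supported in $t\ge 2\tau+\gamma$, which is all the 2D NNWR theorem uses;
\item the full series above.
\end{itemize}
For justifying termwise inversion, use the bound $|J_{n-1}(x)|\le (x/2)^{n-1}/(n-1)!$. It gives absolute convergence of the originals with exponential order, which is cleaner than your $s$-domain estimate.
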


\begin{proof}
To find the inverse Laplace transform
\begin{equation}\label{newsi}
    \Psi(\gamma,\beta,\lambda,\tau,t) := \mathcal{L}^{-1}\left\{e^{-\gamma \sqrt{s^2 + \beta^2 - \lambda e^{-\tau s}}}\right\},
\end{equation}
we rewrite the function as:
\begin{equation*}
    \hat \Psi(s) = \exp\left( -\gamma \sqrt{s^2 + \beta^2} \sqrt{1 - \frac{\lambda e^{-\tau s}}{s^2 + \beta^2}} \right).
\end{equation*}
Using the binomial expansion of $(1-x)^{1/2}$, with $x = \frac{\lambda e^{-\tau s}}{s^2 + \beta^2}$, we have:
\begin{equation*}
   \hat \Psi(s) = e^{-\gamma \sqrt{s^2 + \beta^2}} \cdot \exp\left( -\gamma \sum_{k=1}^{\infty} \binom{1/2}{k} \frac{(-\lambda)^k e^{-k \tau s}}{(s^2 + \beta^2)^{k - 1/2}} \right), 
\end{equation*}
which is further simplified using the Taylor series expansion of the 2nd exponential term:
\begin{equation*}
    \hat \Psi(s) = e^{-\gamma \sqrt{s^2 + \beta^2}} \sum_{m=0}^{\infty} \frac{(-\gamma)^m}{m!} \left( \sum_{k=1}^{\infty} \binom{1/2}{k} \frac{(-\lambda)^k e^{-k \tau s}}{(s^2 + \beta^2)^{k - 1/2}} \right)^m.
\end{equation*}
The term for $m=0$ is simply $e^{-\gamma \sqrt{s^2 + \beta^2}}$. For $m \ge 1$, the expression involves powers of a series. We can collect terms by the total power of $e^{-\tau s}$. Let $N$ be the total index such that the term contains $e^{-N \tau s}$. This results in a series of the form:
\begin{equation*}
  \hat \Psi(s) = e^{-\gamma \sqrt{s^2 + \beta^2}} + \sum_{N=1}^{\infty} C_N \frac{e^{-\gamma \sqrt{s^2 + \beta^2}} e^{-N \tau s}}{(s^2 + \beta^2)^{\nu_N/2}}, 
\end{equation*}
where $C_N$ are coefficients and $\nu_N$ are corresponding powers.

We utilize the known inverse Laplace transform identities involving Bessel functions of the first kind $J_\nu(z)$.
From \citet{wavemulti}, we have,
\begin{equation*}
    \mathcal{L}^{-1} \left\{ e^{-\gamma \sqrt{s^2 + \beta^2}} \right\} = \delta(t - \gamma) - \frac{\beta\gamma}{\sqrt{t^2 - \gamma^2}} J_1(\beta\sqrt{t^2 - \gamma^2}) H(t - \gamma).
\end{equation*}
For the general term, we use results from \citet{oberhettinger,schiff} to obtain:
\begin{equation*}
    \mathcal{L}^{-1} \left\{ \frac{e^{-\gamma \sqrt{s^2 + \beta^2}}}{(s^2 + \beta^2)^{\nu/2}} \right\} = \left( \frac{t - \gamma}{t + \gamma} \right)^{\frac{\nu-1}{2}} \frac{1}{\beta^{\nu-1}} J_{\nu-1}(\beta \sqrt{t^2 - \gamma^2}) H(t - \gamma).
\end{equation*}
The factor $e^{-N \tau s}$ introduces further time shift due to \eqref{eq_12}. Combining the terms, the inverse transform $\Psi(\gamma,\beta,\lambda,\tau,t)$ is expressed as a sum of delayed signals:
\begin{equation*}
\begin{aligned}
\Psi(\gamma,\beta,\lambda,\tau,t) &= \left[ \delta(t - \gamma) - \frac{\beta \gamma}{\sqrt{t^2 - \gamma^2}} J_1(\beta \sqrt{t^2 - \gamma^2}) \right] H(t - \gamma) \\
&+ \frac{\gamma \lambda}{2} \left( \frac{t - \tau - \gamma}{t - \tau + \gamma} \right)^{0} J_0(\beta \sqrt{(t - \tau)^2 - \gamma^2}) H(t - \tau - \gamma) \\
&+ \sum_{N=2}^{\infty} \mathcal{L}^{-1} \left\{ \text{Higher order terms in } \lambda \right\}.
\end{aligned}    
\end{equation*}
The general term for the $N$-th order contribution in $\lambda$ (considering only the linear contribution from the $m=1$ expansion for simplicity in notation) is:
\begin{equation*}
  f_N(t) = -\gamma \binom{1/2}{N} (-\lambda)^N \left( \frac{t - N\tau - \gamma}{t - N\tau + \gamma} \right)^{N - 1} \frac{1}{\beta^{2N-2}} J_{2N-2}(\beta \sqrt{(t - N\tau)^2 - \gamma^2}) H(t - N\tau - \gamma).  
\end{equation*}
The complete solution is the sum of these contributions:
\begin{equation*}
    \Psi(\gamma,\beta,\lambda,\tau,t) = \left[ \delta(t - \gamma) - \frac{\beta \gamma J_1(\beta \sqrt{t^2 - \gamma^2})}{\sqrt{t^2 - \gamma^2}} \right] H(t - \gamma) + \sum_{N=1}^{\infty} A_N(t) H(t - N\tau - \gamma),
\end{equation*}
where $A_N(t)$ represents the inverse transform of the $N$-th order term in the expansion of the exponential function. This completes the result.
\end{proof}
%

\begin{theorem}[NNWR Convergence in 2D]
Let the relaxation parameter \( \theta = 1/4 \). For a fixed time window length \( T \in (0,\infty)\), the NNWR algorithm (\ref{nnwr_2d})-(\ref{nnwr_2d_cor}) achieves convergence in at most \( k + 1 \) iterations, provided
$$
T/k < 2 h_{\min}/c,
$$
where \( c \) denotes the wave propagation speed.
\end{theorem}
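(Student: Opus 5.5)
The plan is to reduce the 2D iteration, mode by mode, to the 1D multi-subdomain analysis of Theorem \ref{theorem4}. The only change is that the Laplace symbol $X$ acquires a shift by $n^2$.

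\textbf{Step 1: modal reduction.} I expand every quantity in the Fourier sine series in $y$: the Dirichlet errors $e_i^k$, the Neumann corrections $\phi_i^k$, and the interface traces $\chi_i^k$, with coefficients $E_i^k(x,n,t)$, $\Phi_i^k(x,n,t)$ and $\hat\chi_i^k(n,t)$. By \eqref{2d_to_1d}, each mode $n$ satisfies a 1D delayed wave equation with an extra reaction term $c^2n^2E$. The Dirichlet subproblems, the Neumann correction \eqref{nnwr_2d_cor} and the update rule with $\theta=1/4$ all decouple across modes.

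\textbf{Step 2: Laplace transform and reuse of the 1D algebra.} Taking the Laplace transform in $t$ gives the same ODE in $x$ as in the 1D case, with
\[
X_n=\frac{\sqrt{s^2+c^2n^2-\lambda e^{-\tau s}}}{c}
\]
in place of $X$. The derivation of \eqref{eq:update}--\eqref{eq:25} only uses the $\sinh/\cosh$ structure and the identity $\beta_i^2-\alpha_i^2=1$, so it carries over verbatim with $\alpha_i=\sinh(h_iX_n)$ and $\beta_i=\cosh(h_iX_n)$. Inducting on the iteration index as in \eqref{eq:26}--\eqref{eq:27}, and expanding the coefficients $\hat r_{i,j}$ into exponential series, I obtain the analogue of \eqref{eq:28}--\eqref{eq:29}. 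In it, every term multiplying $\hat\chi^0_{i+j}(n,s)$ is a linear combination of $e^{-\gamma X_n}$ with $\gamma\ge 2kh_{\min}$; more precisely $\gamma\ge 2kh_{\min}/c$ once $c$ is absorbed into the exponent, $e^{-\gamma\sqrt{s^2+c^2n^2-\lambda e^{-\tau s}}}$.

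\textbf{Step 3: inversion.} By the preceding Lemma with $\beta=cn$, each $\mathcal{L}^{-1}\{e^{-\gamma\sqrt{s^2+\beta^2-\lambda e^{-\tau s}}}\}$ is supported in $t\ge\gamma$: the leading part carries $H(t-\gamma)$, and the delay contributions carry $H(t-N\tau-\gamma)$ with $N\ge1$. By the convolution theorem, each term of $\hat\chi_i^k(n,t)$ is then a convolution of $\chi^0(n,\cdot)$ with a kernel vanishing for $t<2kh_{\min}/c$. So if $T<2kh_{\min}/c$, every mode satisfies $\hat\chi_i^k(n,\cdot)\equiv0$ on $[0,T]$. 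Summing the sine series gives $\chi_i^k\equiv0$ on $(0,\pi)\times(0,T)$. One further Dirichlet solve then returns the exact solution, which is the claimed convergence in $k+1$ iterations.

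\textbf{Main obstacle.} The hard part is making Step 3 rigorous uniformly in $n$ and in the infinite series. Two questions arise: whether the exponential expansions of $1/\sinh(h_iX_n)$ converge for $\Re s$ large enough, uniformly in $n$; and whether termwise inversion is legitimate. My first approach is to work with $\Re s>\sigma_0$, with $\sigma_0$ chosen so that $\Re X_n\ge \Re s/c - C$ for all $n$. This bound holds because $c^2n^2$ only increases the real part of the square root when $\Re s$ dominates $|\lambda e^{-\tau s}|$. I would then invoke a Paley--Wiener-type support argument: a function analytic in that half-plane and bounded by $Ce^{-\gamma\Re s}$ there is the transform of something supported in $[\gamma,\infty)$. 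That bypasses termwise inversion altogether and gives the support property directly for the whole coefficient of $\hat\chi^0$. The strict inequality $T/k<2h_{\min}/c$ gives slack that absorbs the delta-type boundary contributions.
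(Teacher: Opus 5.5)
Your proposal is correct and follows essentially the same route as the paper: the sine-mode reduction in $y$, reuse of the 1D NNWR recursion with $X$ replaced by $\sqrt{s^2+c^2n^2-\lambda e^{-\tau s}}/c$, and inversion through the lemma for $\Psi(\gamma,cn,\lambda,\tau,t)$, which shows that each modal interface error vanishes for $t<2kh_{\min}/c$. Your Paley--Wiener alternative is a sound way to make rigorous the termwise inversion that the paper leaves formal: bound each entry of the one-step modal iteration matrix by $Ce^{-2h_{\min}\Re X_n}$, so that its $k$-th power is $O(e^{-2kh_{\min}\Re s/c})$. Uniformity in $n$ is not actually needed, since each mode's vanishing on $[0,T]$ is established separately.
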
\label{thm5}
\begin{proof}
Applying the Laplace transform in time to equation (\ref{2d_to_1d})  yields
\[
(s^2 + c^2 n^2-\lambda e^{-\tau s}) \, \hat{E}^k_i(x, s) - c^2 \frac{d^2 \hat{E}^k_i}{dx^2} = 0.
\]
Now for each mode $n$, we treat the system just like we did in 1-D in Theorem \ref{theorem4}, where the recurrence relations for the interface functions had the form:
\begin{equation*}
\hat{g}^k_i(s) = \sum_j A^{(k)}_{ij}\left( \sqrt{s^2-\lambda e^{-\tau s}} \right) \ \, \hat{g}^0_j(s).    
\end{equation*}
In 2D the update conditions will be modified for each Fourier mode \( n = 1, 2, \ldots \), as follows:
\begin{equation}\label{updatestep2d}
    \hat \chi^k_i(n, s) = \sum_j A^{(k)}_{ij}\left( \sqrt{s^2 + c^2 n^2-\lambda e^{-\tau s}} \right) \, \hat\chi^0_j(n, s).
\end{equation}
So, again, we express the interface values in the Laplace domain as a linear combination of initial data, modulated by these coefficients $A_{ij}^k\left( \sqrt{s^2 + c^2 n^2-\lambda e^{-\tau s}} \right)$.
 The coefficients $A^{(k)}_{ij}\left( \sqrt{s^2-\lambda e^{-\tau s}} \right)$ in 1-D are linear combinations of exponential terms of the form $e^{-\alpha X}$, where $X= \frac{\sqrt{s^2-\lambda e^{-\tau s}}}{c}$. Using the Efros theorem and exponential series expansion, we find that the presence of a $\lambda$-term causes additional delay, so we only take into account the first term of the series without terms containing $\lambda$. The modified coefficients \( A^{(k)}_{ij}(\sqrt{s^2 + c^2 n^2-\lambda e^{-\tau s}}) \) become sums of exponential terms of the form \( e^{-\gamma \sqrt{s^2 + c^2 n^2 -\lambda e^{-\tau s}}} \), where $\gamma\geq 2kh_l/c$. Therefore, we use the
definition of $\Psi(\gamma,\beta,\lambda,\tau,t)$ in \eqref{newsi} to compute the inverse Laplace transform of \eqref{updatestep2d}, 
\begin{equation*}
  \chi_{i}^{k}(n,t)=\sum_{j}\sum_{l}\Psi(\rho_{i,l,j,k},cn,\lambda,\tau,t)*\chi_{j}^{0}(n,t),  
\end{equation*}
with $\rho_{i,l,j,k}\geq2kh_{\min}/c$. So for $t<2kh_{\min}/c$, $\chi_{i}^{k}(n,t)=0$ for each $n$. So, when \( t < \frac{2k h_{\min}}{c} \), after one more iteration, all interface discrepancies are eliminated and the NNWR algorithm recovers the exact solution across the entire spatial domain.
\end{proof}
\section{Numerical illustrations}
For the numerical experiments, we consider the error equations corresponding to the model problem~\ref{eq_1}. 
The wave equation is discretized using the \textit{Leapfrog scheme}, employing a centered finite difference scheme 
in both space and time. We discretize the time domain such that the delay $\tau$ is an integer multiple of the time step $\Delta t$, i.e., $\tau = m \Delta t$. The resulting scheme is given by
\[
u_i^{n+1}
=
2u_i^n - u_i^{n-1}
+ r^2 \left( u_{i+1}^n - 2u_i^n + u_{i-1}^n \right)
+ \Delta t^2 \lambda\, \big(u_i^{n-m}\big),
\]
where, $
r = \frac{c\,\Delta t}{\Delta x}
$ satisfies the CFL condition $r \leq 1 $. The parameters are chosen as $c=1$ and $\lambda=1.6$ for all the experiments. The initial guess is set as $h^0(t) = t^2$ for $t \in (0,T]$ at all interface boundaries. The delay parameter is taken as $\tau=3$ unless otherwise specified.
\subsection{DNWR asymmetric convergence}
For asymmetric decomposition we consider two subdomains $\Omega_1 = (0,4)$ and $\Omega_2 = (4,6)$, so that $a = 4$ and $b = 2$, 
implying $a > b$ and vice versa. For discretization we choose mesh size $\Delta x = 0.025=\Delta t$ which is the time step. 
 In Fig.~\ref{fourier_estimate}, we compare the theoretically obtained bound via Fourier 
analysis with the numerical convergence result of the DNWR method for $\theta = 1/2$. The numerical results agree with the theoretical estimate, thereby validating the bound established in 
Theorem~\ref{thm:thmdnwr}.

Fig.~\ref{diftime_wave} illustrates the convergence behavior for various time window lengths $T$, confirming the results established in Theorem \ref{thm_2}. Specifically, the DNWR method converges in at most $k+1$ iterations provided that $\frac{T}{k} \leq 2 \min\left\{ \frac{a}{c}, \frac{b}{c} \right\}$. We evaluate this using two experimental settings: \textbf{Case I:} with $\Omega_1 = (0,4.5) \ \& \ \Omega_2 = (4.5,6)$, and \textbf{Case II:} with $\Omega_1 = (0,2.5) \ \& \ \Omega_2 = (2.5,6)$.  For example, in Case I, with $c=1$ and $\min\{a,b\}/c=1.5$, choosing $T=3$ yields
$T/2\min\left\{ a/c, b/c \right\} = 1.$
This implies $k=1$, demonstrating that the DNWR method converges in at most $1+1=2$ iterations, as verified in the left panel of Fig.~\ref{diftime_wave}. 
It is also worth noting that the magnitude (or strength) of the time delay does not directly affect the convergence behavior of the proposed algorithms. Instead, convergence is primarily governed by the length of the time window over which the solution is computed.
Furthermore, Fig.~\ref{difdelay} presents results for different values of the delay parameter $\tau$ in the DNWR algorithm for the two-subdomain case. The results confirm that variations in $\tau$-value do not influence the convergence behavior.  

\begin{figure}
    \centering
    \includegraphics[width=0.462\linewidth]{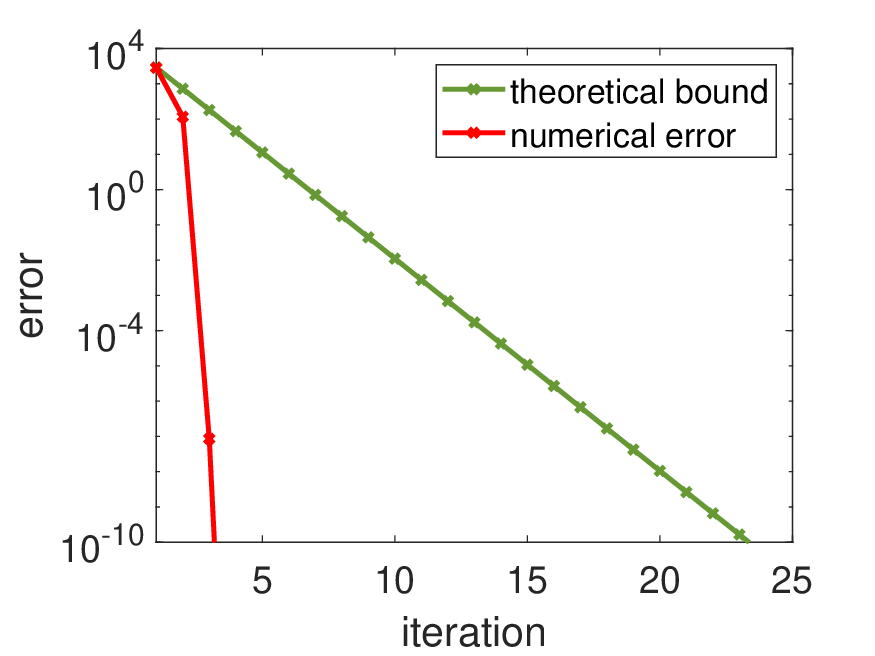}
    \includegraphics[width=0.462\linewidth]{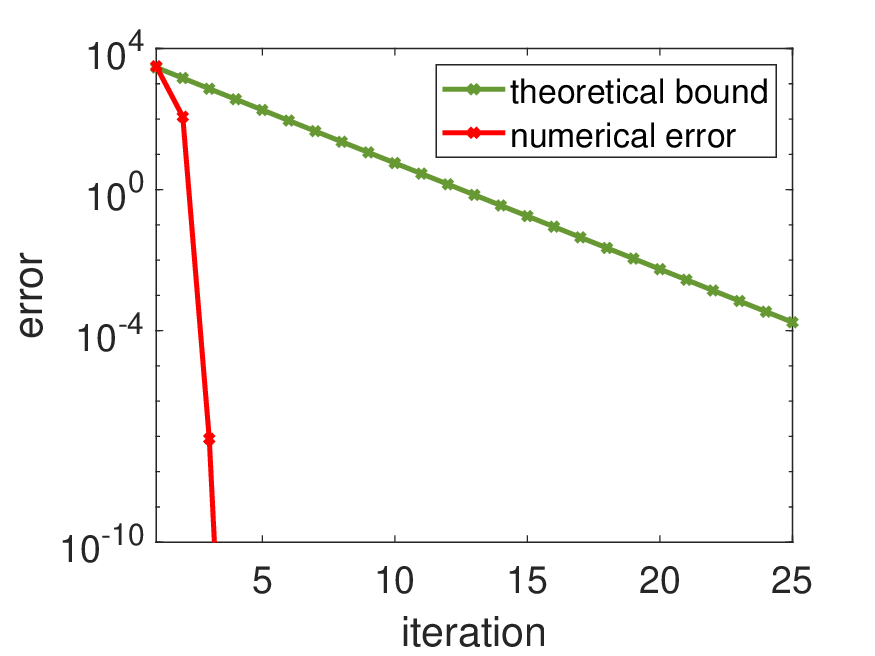}
   
   \caption{Comparison of the DNWR numerical convergence against the theoretical bound for $\theta=1/2$. Figure left corresponds to the case $a>b$, while the right one depicts the case $b>a$}
    \label{fourier_estimate}
\end{figure}

\begin{figure}
    \centering
    \includegraphics[width=0.462\linewidth]{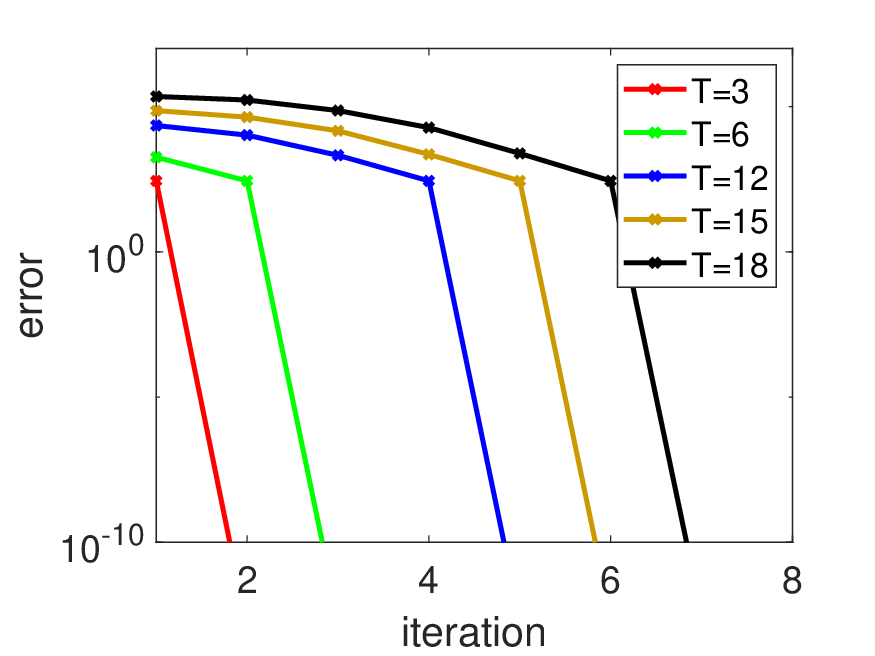}
    \includegraphics[width=0.462\linewidth]{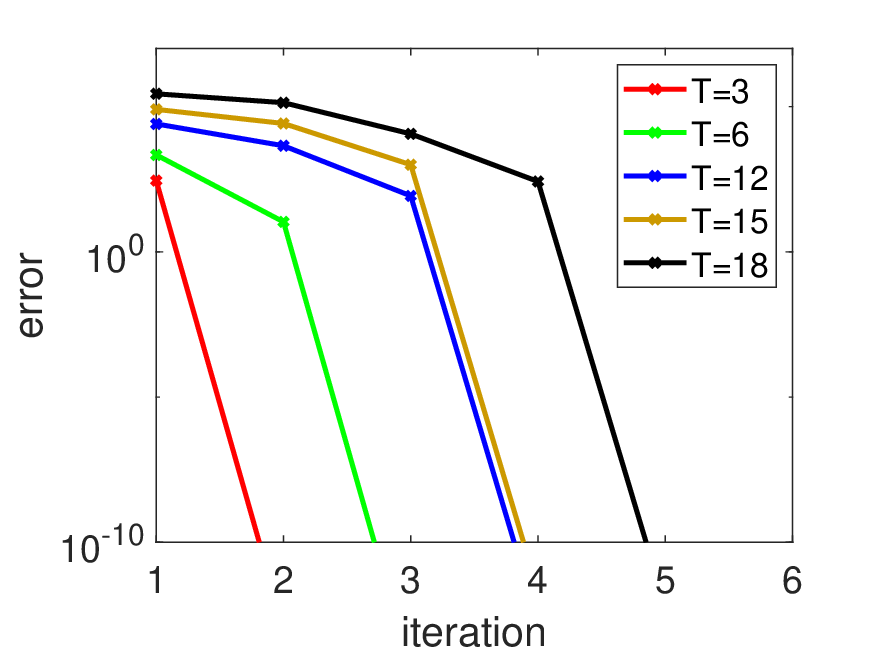}
    \caption{Convergence result of DNWR method for $\theta=1/2$ when time window (T) sizes are different: on the left - minimum subdomain width is 1.5; on the right - minimum subdomain width is 2.5 }
    \label{diftime_wave}
\end{figure}
\begin{figure}
    \centering
    \includegraphics[width=0.462\linewidth]{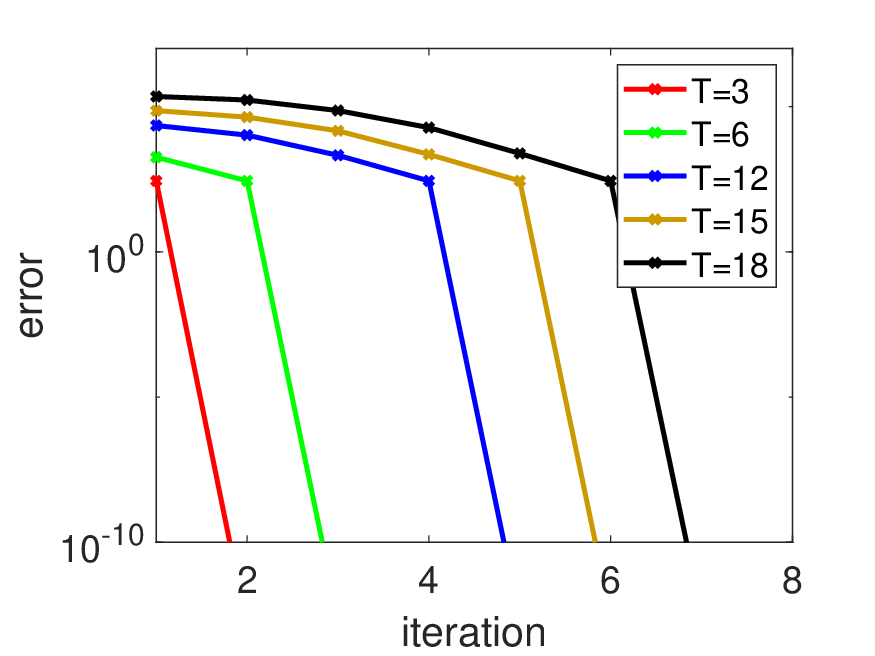}
    \includegraphics[width=0.462\linewidth]{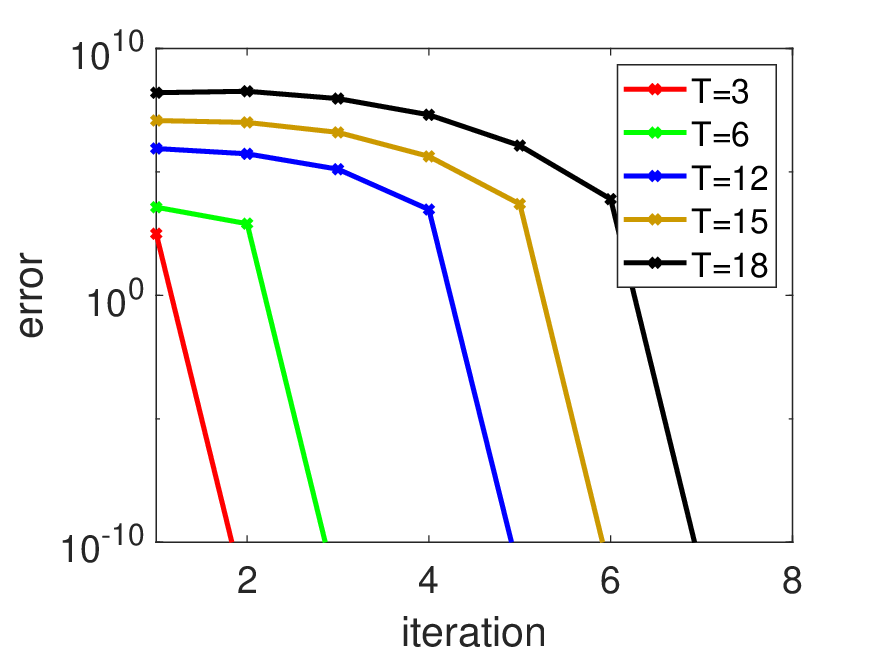}
    \caption{Convergence result of DNWR method for $\theta=1/2$ when time window (T) sizes are different and min. subdomain width is $1.5$: on the left - value of $\tau=3$; on the right - value of $\tau=0.3$ }
    \label{difdelay}
\end{figure}
We also compare the performance of DNWR and NNWR for two subdomains with the classical SWR method, using a final time of $T=6$ and an overlap of $10\times\Delta x$ in Fig.~\ref{compare_wave}. The results show that both DNWR and NNWR exhibit superior convergence behavior compared to the classical SWR method.
\begin{figure}
    \centering
    \includegraphics[width=0.48 \linewidth]{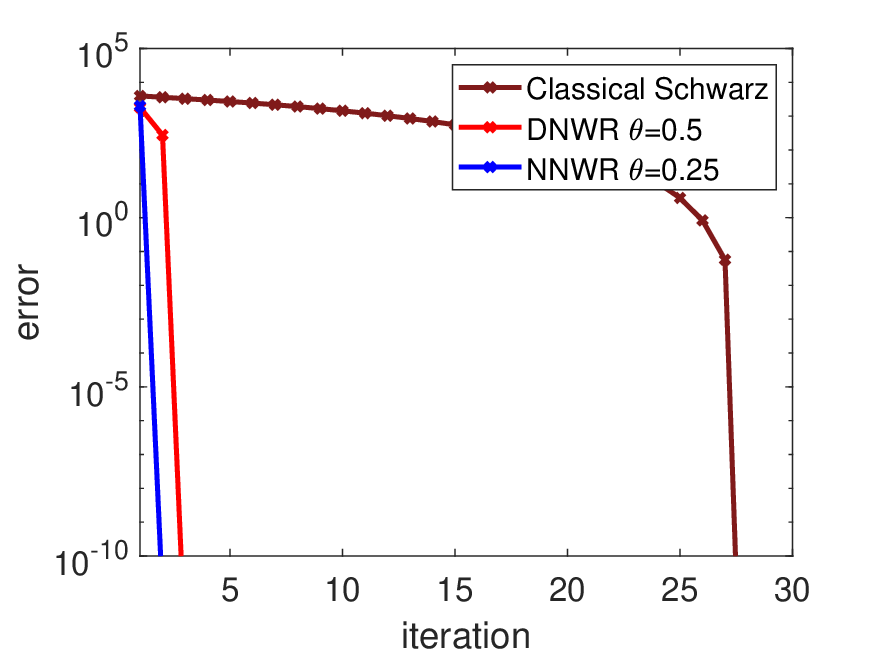}
    \caption{Comparison of DNWR and NNWR with classical Schwarz Waveform Relaxation} 
     \label{compare_wave}
    \end{figure}
\subsection{Heterogeneous Wave PDE with delay}
For the heterogeneous case, we use implicit central difference scheme and consider subdomains with lengths 
$ \left | \Omega_1\right |= 4.5$ and $\left |\Omega_2 \right|= 1.5$, and choose $\lambda = 1.6$ for the experiment. The spatial discretization is taken as \(\Delta x_1 = 0.3\) in subdomain \(\Omega_1\) and \(\Delta x_2 = 0.1\) in subdomain \(\Omega_2\),  while $\Delta t=0.1$ in both subdomains. 
We solve the delay equation with different wave propagation speeds, 
\(c_1^2 = 1\) in \(\Omega_1\) and \(c_2^2 = \frac{1}{9}\) in \(\Omega_2\), for different choices of $\theta$ (relaxation parameter).
From Fig.~\ref{hetero_wave}, we observe that the optimal value of
$\theta=0.25=1/\left (1+{\frac{c_1}{c_2}}\right )$
This result confirms the theoretical optimal value of $\theta$ for the DNWR method obtained in Theorem \ref{thm:thmdnwrhetero}.
\begin{figure}
    \centering
    \includegraphics[width=0.48 \linewidth]{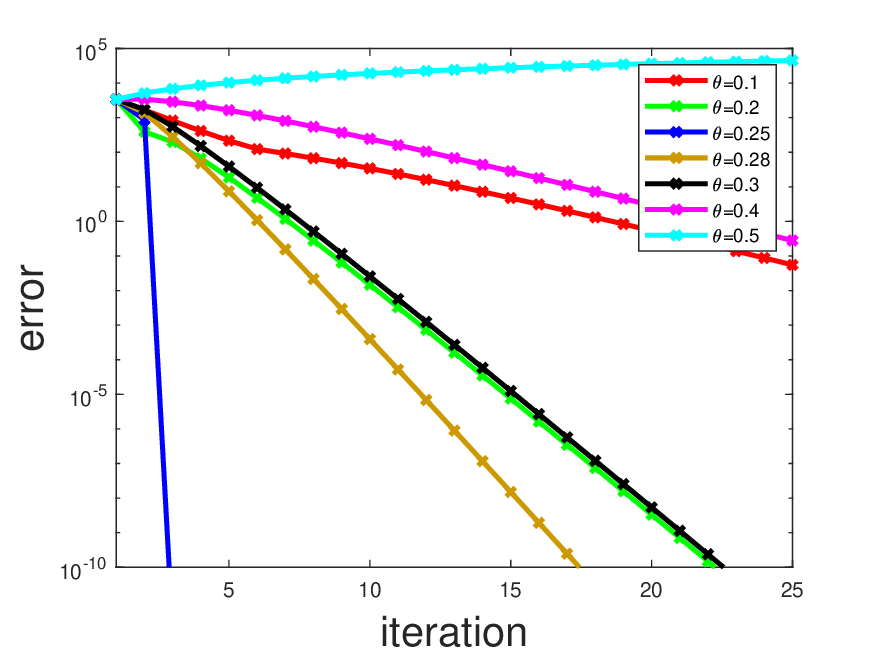}
    \caption{DNWR convergence for Wave PDE with time delay having different wave speed $c_1=1$, $c_2=1/3$} 
     \label{hetero_wave}
    \end{figure}
    \subsection{NNWR in multi-subdomain setup}
To carry out the numerical experiments in the multisubdomain setting, the spatial domain \(\Omega = (0,6)\) is first decomposed into three non-overlapping subdomains. In the first experiment, the minimum subdomain length is kept as \(h_{\min} = 1.5\), with
$\left|\Omega_1 \right |= 2.5, \left|\Omega_2\right| = 1.5, \left|\Omega_3 \right | = 2.$
In the second setting, the minimum subdomain length is \(h_{\min} = 1\), with $ \left|\Omega_1 \right| = 2, \left |\Omega_2 \right | = 1, \left |\Omega_3 \right| = 3.$
For both the cases, we use a uniform spatial mesh size \(\Delta x = 0.025= \Delta t\) (time step). See Fig.~\ref{nnwr1D_3sub} for the corresponding error plots. Next, we consider a decomposition of domain $'\Omega'$ into five subdomains, as depicts in Fig.~\ref{nnwr1D_5sub}. In the first experiment, the minimum subdomain length is \(h_{\min} = 0.5\), with $\left|\Omega_1 \right| = 1=\left|\Omega_5\right|, \quad \left|\Omega_2 \right|= 1.5, \quad \left|\Omega_3\right| = 0.5, \quad \left|\Omega_4 \right| = 2.$
In the second case, the minimum subdomain length is \(h_{\min} = 1\), where
$\left|\Omega_1\right| = \left|\Omega_3\right| = \left|\Omega_5\right| = 1,
\quad
\left|\Omega_2\right| = \left|\Omega_4\right| = 1.5.$ Fig. \ref{nnwr1D_3sub}, and Fig. \ref{nnwr1D_5sub} validates the result otained in Theorem \ref{theorem4}.
\begin{figure}
    \centering
    \includegraphics[width=0.462\linewidth]{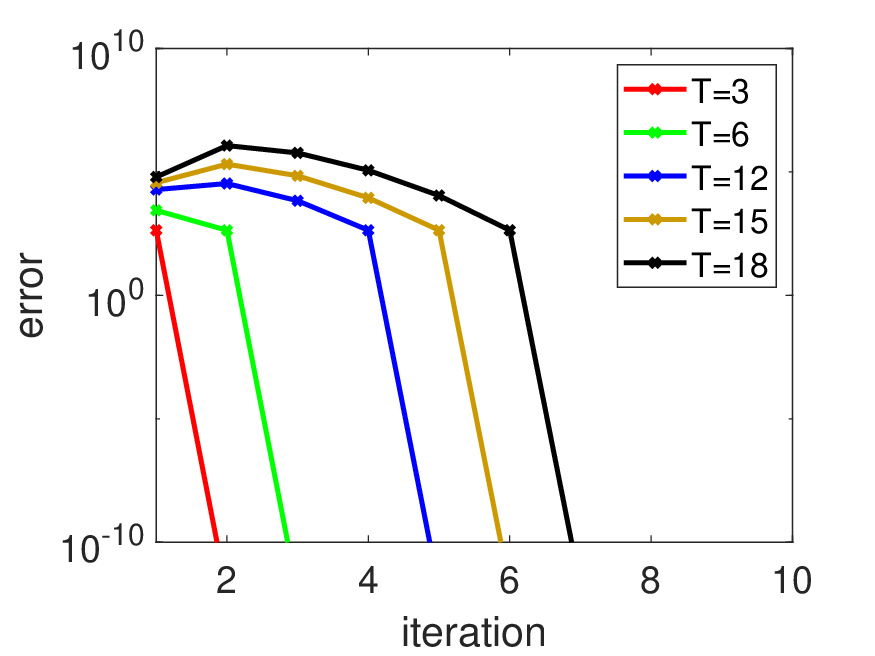}
    \includegraphics[width=0.462\linewidth]{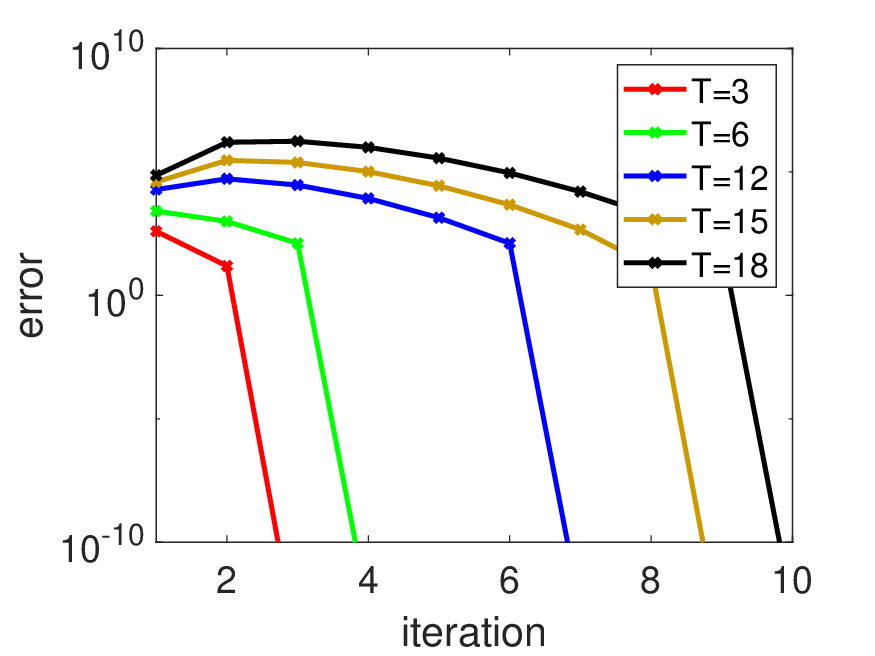}
    \caption{Convergence of NNWR methods for different time windows (T) for 3 subdomains. Left: Min subdomain length is $1.5$, Right: Min subdomain length is $1$.}
    \label{nnwr1D_3sub}
\end{figure}

\begin{figure}
    \centering
    \includegraphics[width=0.462\linewidth]{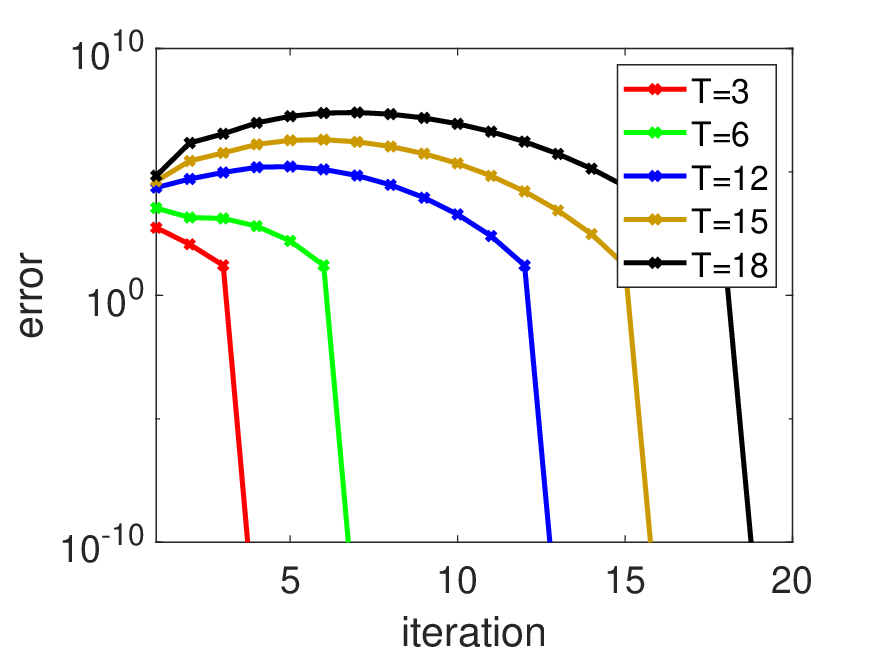}
    \includegraphics[width=0.462\linewidth]{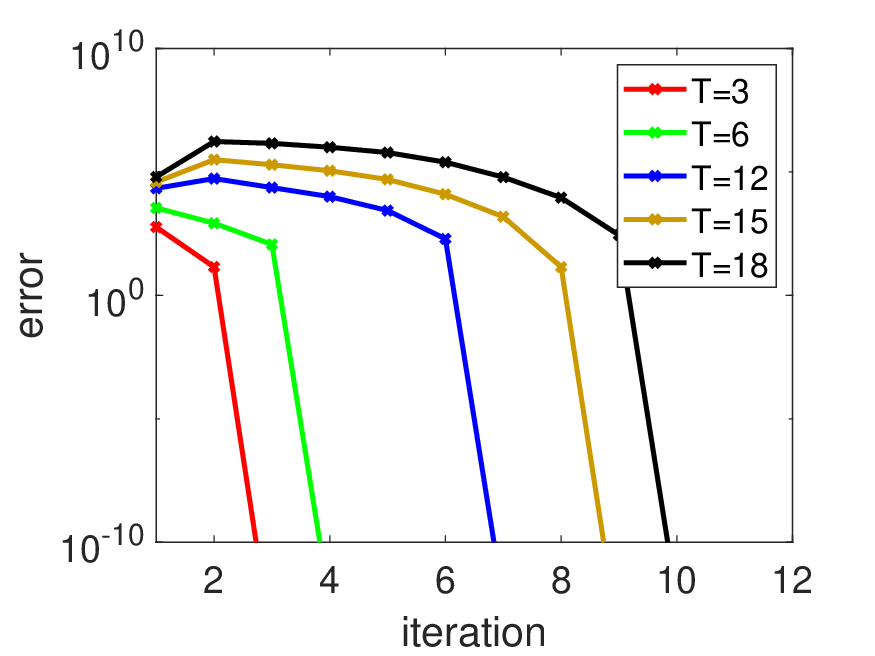}
    \caption{Convergence of NNWR methods for different time windows (T) for 5 subdomains in 1D: Left: Min subdomain length is $0.5$, Right: Min subdomain length is $1$.}
    \label{nnwr1D_5sub}
\end{figure}
\subsection{NNWR in 2D}
Consider the problem described in~\eqref{eq_1} on the two-dimensional spatial domain 
\(\Omega = (0,6)\times(0,6)\). The Leapfrog scheme is used for discretization with 
\(\Delta x = \Delta y = 0.1\) and \(\Delta t = 0.05\). The domain is partitioned into 
three non-overlapping subdomains. \textbf{Case I}, 
\(\Omega_1 = (0,0.5)\times(0,6)\), 
\(\Omega_2 = (0.5,2)\times(0,6)\), and 
\(\Omega_3 = (2,6)\times(0,6)\), with a minimum subdomain width 
\(h_{\min} = 0.5\). \textbf{Case II},
\(\Omega_1 = (0,1)\times(0,6)\), 
\(\Omega_2 = (1,3)\times(0,6)\), and 
\(\Omega_3 = (3,6)\times(0,6)\), with a minimum subdomain width 
\(h_{\min} = 1\).  The error curves are shown in Fig.~\ref{diftime_wave2D} confirms our theoretical findings from Theorem \ref{thm5}.

\begin{figure}
    \centering
    \includegraphics[width=0.462\linewidth]{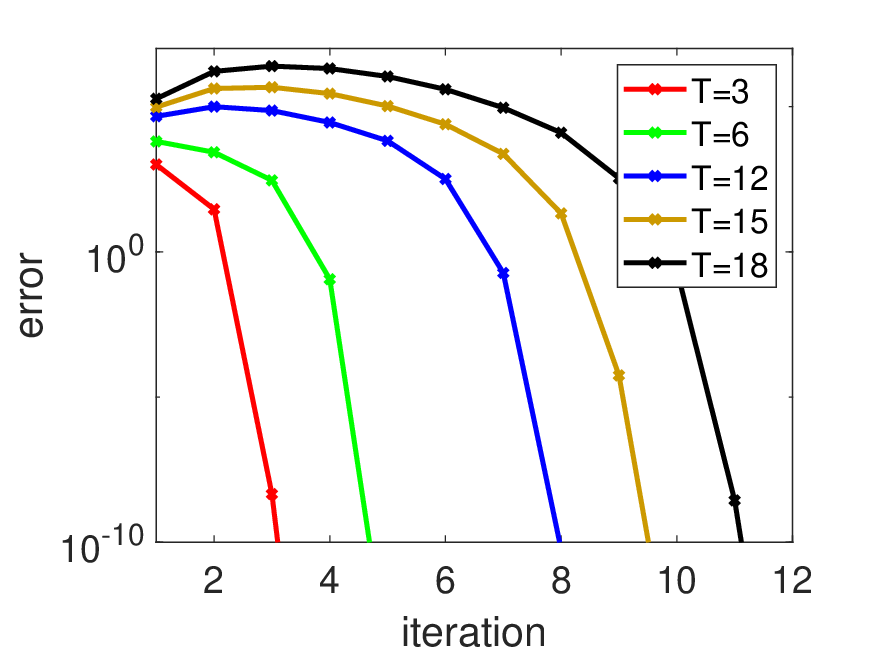}
    \includegraphics[width=0.462\linewidth]{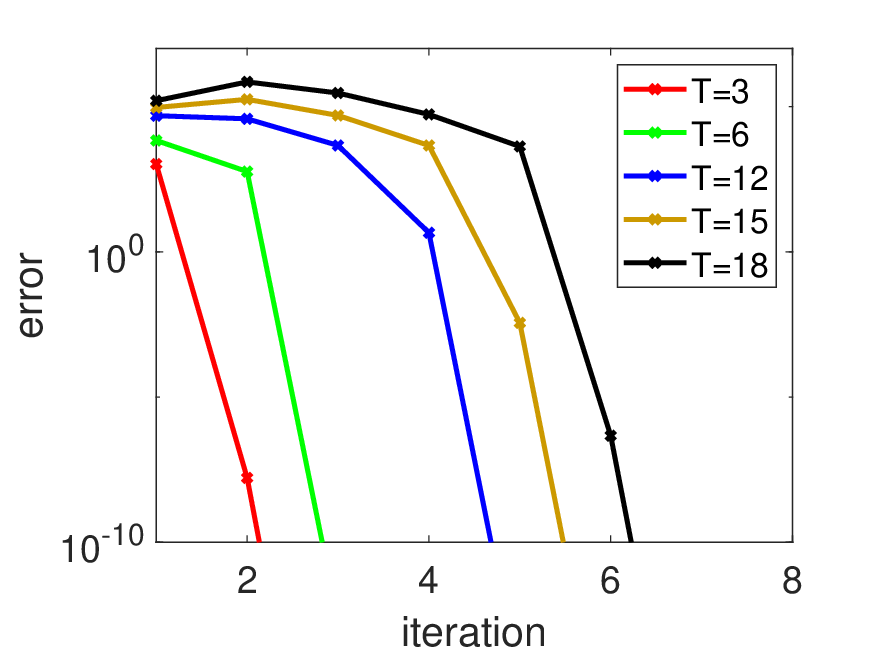}
    \caption{Convergence of NNWR methods for different time windows (T) for 3 subdomains in 2D. Left: Min subdomain length is $0.5$, Right: Min subdomain length is $1$.}
    \label{diftime_wave2D}
\end{figure}

\section{Conclusion}

This paper provides rigorous convergence analysis of the DNWR method for 
asymmetrical domain decomposition using both Fourier and Laplace transform 
techniques, applied to the wave equation with time delay. The Fourier analysis of the DNWR algorithm for asymmetric subdomains establishes that for a relaxation parameter $\theta=1/2$, the method exhibits linear convergence. Furthermore, using Laplace transform analysis, we derive the finite-step convergence property. Significant results are also obtained for the heterogeneous media; satisfying condition $|\Omega_1|/{c_1} = |\Omega_2|/{c_2}$, the DNWR method achieves finite-step convergence for the optimal parameter $\theta = 1/(1+{c_1/c_2})$.\\

We also analyze the NNWR method in a multi-subdomain setting in 1-D, which is extended to two-dimensional 
spatial domains decomposed into strips. We establish that with a relaxation parameter fixed at $\theta=1/4$, the method achieves the finite step convergence. Numerical simulations are presented to validate the theoretical findings. The 
outcomes demonstrate that both DNWR and NNWR methods are effective for solving 
wave equations with time delay, and can be treated as a two-iteration methods for sufficiently small time window lengths. These findings suggest that the proposed framework can be effectively generalized to more complex systems, offering a promising solution for parallel computing for delay differential equations.\\
\bibliographystyle{elsarticle-harv}
\bibliography{references}
\end{document}